\newtheorem{theorem}{Theorem}
\theoremstyle{plain}
\newtheorem{acknowledgement}{Acknowledgement}
\newtheorem{axiom}{Axiom}
\newtheorem{condition}{Condition}
\newtheorem{conjecture}{Conjecture}
\newtheorem{corollary}{Corollary}
\newtheorem{definition}{Definition}
\newtheorem{example}{Example}
\newtheorem{exercise}{Exercise}
\newtheorem{lemma}{Lemma}
\newtheorem{proposition}{Proposition}
\newtheorem{remark}{Remark}
\numberwithin{equation}{section}
\chardef\@x10\chardef\@xv60
\def\tcitime{
\def\@time{%
  \@minute\time\@hour\@minute\divide\@hour\@xv
  \ifnum\@hour<\@x 0\fi\the\@hour:%
  \multiply\@hour\@xv\advance\@minute-\@hour
  \ifnum\@minute<\@x 0\fi\the\@minute
  }}%
\def\x@hyperref#1#2#3{%
   \catcode`\~ = 12
   \catcode`\$ = 12
   \catcode`\_ = 12
   \catcode`\# = 12
   \catcode`\& = 12
   \y@hyperref{#1}{#2}{#3}%
}
\def\y@hyperref#1#2#3#4{%
   #2\ref{#4}#3
   \catcode`\~ = 13
   \catcode`\$ = 3
   \catcode`\_ = 8
   \catcode`\# = 6
   \catcode`\& = 4
}
\def\QCTOpt[#1]#2{%
  \def\QCTOptB{#1}
  \def\QCTOptA{#2}
}
\def\QCTNOpt#1{%
  \def\QCTOptA{#1}
  \let\QCTOptB\empty
}
\def\Qct{%
  \@ifnextchar[{%
    \QCTOpt}{\QCTNOpt}
}
\def\QCBOpt[#1]#2{%
  \def\QCBOptB{#1}%
  \def\QCBOptA{#2}%
}
\def\QCBNOpt#1{%
  \def\QCBOptA{#1}%
  \let\QCBOptB\empty
}
\def\Qcb{%
  \@ifnextchar[{%
    \QCBOpt}{\QCBNOpt}%
}
\def\PrepCapArgs{%
  \ifx\QCBOptA\empty
    \ifx\QCTOptA\empty
      {}%
    \else
      \ifx\QCTOptB\empty
        {\QCTOptA}%
      \else
        [\QCTOptB]{\QCTOptA}%
      \fi
    \fi
  \else
    \ifx\QCBOptA\empty
      {}%
    \else
      \ifx\QCBOptB\empty
        {\QCBOptA}%
      \else
        [\QCBOptB]{\QCBOptA}%
      \fi
    \fi
  \fi
}
\def\GRAPHICSPS#1{%
 \ifcase\GRAPHICSTYPE
   \special{ps: #1}%
 \or
   \special{language "PS", include "#1"}%
 \fi
}%
\def\graffile#1#2#3#4{%
    \bgroup
       \@inlabelfalse
       \leavevmode
       \@ifundefined{bbl@deactivate}{\def~{\string~}}{\activesoff}%
        \raise -#4 \BOXTHEFRAME{%
           \hbox to #2{\raise #3\hbox to #2{\null #1\hfil}}}%
    \egroup
}%
\def\draftbox#1#2#3#4{%
 \leavevmode\raise -#4 \hbox{%
  \frame{\rlap{\protect\tiny #1}\hbox to #2%
   {\vrule height#3 width\z@ depth\z@\hfil}%
  }%
 }%
}%
\let\nographics=\@msidraft
\newif\ifwasdraft
\def\GRAPHIC#1#2#3#4#5{%
   \ifnum\@msidraft=\@ne\draftbox{#2}{#3}{#4}{#5}%
   \else\graffile{#1}{#3}{#4}{#5}%
   \fi
}
\def\addtoLaTeXparams#1{%
    \edef\LaTeXparams{\LaTeXparams #1}}%
\newif\ifBoxFrame \BoxFramefalse
\newif\ifOverFrame \OverFramefalse
\newif\ifUnderFrame \UnderFramefalse
\def\BOXTHEFRAME#1{%
   \hbox{%
      \ifBoxFrame
         \frame{#1}%
      \else
         {#1}%
      \fi
   }%
}
\def\doFRAMEparams#1{\BoxFramefalse\OverFramefalse\UnderFramefalse\readFRAMEparams#1\end}%
\def\readFRAMEparams#1{%
 \ifx#1\end%
  \let\next=\relax
  \else
  \ifx#1i\dispkind=\z@\fi
  \ifx#1d\dispkind=\@ne\fi
  \ifx#1f\dispkind=\tw@\fi
  \ifx#1t\addtoLaTeXparams{t}\fi
  \ifx#1b\addtoLaTeXparams{b}\fi
  \ifx#1p\addtoLaTeXparams{p}\fi
  \ifx#1h\addtoLaTeXparams{h}\fi
  \ifx#1X\BoxFrametrue\fi
  \ifx#1O\OverFrametrue\fi
  \ifx#1U\UnderFrametrue\fi
  \ifx#1w
    \ifnum\@msidraft=1\wasdrafttrue\else\wasdraftfalse\fi
    \@msidraft=\@ne
  \fi
  \let\next=\readFRAMEparams
  \fi
 \next
 }%
\def\IFRAME#1#2#3#4#5#6{%
      \bgroup
      \let\QCTOptA\empty
      \let\QCTOptB\empty
      \let\QCBOptA\empty
      \let\QCBOptB\empty
      #6%
      \parindent=0pt
      \leftskip=0pt
      \rightskip=0pt
      \setbox0=\hbox{\QCBOptA}%
      \@tempdima=#1\relax
      \ifOverFrame
          \typeout{This is not implemented yet}%
          \show\HELP
      \else
         \ifdim\wd0>\@tempdima
            \advance\@tempdima by \@tempdima
            \ifdim\wd0 >\@tempdima
               \setbox1 =\vbox{%
                  \unskip\hbox to \@tempdima{\hfill\GRAPHIC{#5}{#4}{#1}{#2}{#3}\hfill}%
                  \unskip\hbox to \@tempdima{\parbox[b]{\@tempdima}{\QCBOptA}}%
               }%
               \wd1=\@tempdima
            \else
               \textwidth=\wd0
               \setbox1 =\vbox{%
                 \noindent\hbox to \wd0{\hfill\GRAPHIC{#5}{#4}{#1}{#2}{#3}\hfill}\\%
                 \noindent\hbox{\QCBOptA}%
               }%
               \wd1=\wd0
            \fi
         \else
            \ifdim\wd0>0pt
              \hsize=\@tempdima
              \setbox1=\vbox{%
                \unskip\GRAPHIC{#5}{#4}{#1}{#2}{0pt}%
                \break
                \unskip\hbox to \@tempdima{\hfill \QCBOptA\hfill}%
              }%
              \wd1=\@tempdima
           \else
              \hsize=\@tempdima
              \setbox1=\vbox{%
                \unskip\GRAPHIC{#5}{#4}{#1}{#2}{0pt}%
              }%
              \wd1=\@tempdima
           \fi
         \fi
         \@tempdimb=\ht1
         \advance\@tempdimb by -#2
         \advance\@tempdimb by #3
         \leavevmode
         \raise -\@tempdimb \hbox{\box1}%
      \fi
      \egroup%
}%
\def\DFRAME#1#2#3#4#5{%
  \hfil\break
  \bgroup
     \leftskip\@flushglue
     \rightskip\@flushglue
     \parindent\z@
     \parfillskip\z@skip
     \let\QCTOptA\empty
     \let\QCTOptB\empty
     \let\QCBOptA\empty
     \let\QCBOptB\empty
     \vbox\bgroup
        \ifOverFrame
           #5\QCTOptA\par
        \fi
        \GRAPHIC{#4}{#3}{#1}{#2}{\z@}%
        \ifUnderFrame
           \break#5\QCBOptA
        \fi
     \egroup
   \egroup
   \break
}%
\def\FFRAME#1#2#3#4#5#6#7{%
  \@ifundefined{floatstyle}
    {
     \begin{figure}[#1]%
    }
    {
     \ifx#1h
      \begin{figure}[H]%
     \else
      \begin{figure}[#1]%
     \fi
    }
  \let\QCTOptA\empty
  \let\QCTOptB\empty
  \let\QCBOptA\empty
  \let\QCBOptB\empty
  \ifOverFrame
    #4
    \ifx\QCTOptA\empty
    \else
      \ifx\QCTOptB\empty
        \caption{\QCTOptA}%
      \else
        \caption[\QCTOptB]{\QCTOptA}%
      \fi
    \fi
    \ifUnderFrame\else
      \label{#5}%
    \fi
  \else
    \UnderFrametrue%
  \fi
  \begin{center}\GRAPHIC{#7}{#6}{#2}{#3}{\z@}\end{center}%
  \ifUnderFrame
    #4
    \ifx\QCBOptA\empty
      \caption{}%
    \else
      \ifx\QCBOptB\empty
        \caption{\QCBOptA}%
      \else
        \caption[\QCBOptB]{\QCBOptA}%
      \fi
    \fi
    \label{#5}%
  \fi
  \end{figure}%
 }%
\def\makeactives{
  \catcode`\"=\active
  \catcode`\;=\active
  \catcode`\:=\active
  \catcode`\'=\active
  \catcode`\~=\active
}
   \gdef\activesoff{%
      \def"{\string"}%
      \def;{\string;}%
      \def:{\string:}%
      \def'{\string'}%
      \def~{\string~}%
    }
\def\FRAME#1#2#3#4#5#6#7#8{%
 \bgroup
 \ifnum\@msidraft=\@ne
   \wasdrafttrue
 \else
   \wasdraftfalse%
 \fi
 \def\LaTeXparams{}%
 \dispkind=\z@
 \def\LaTeXparams{}%
 \doFRAMEparams{#1}%
 \ifnum\dispkind=\z@\IFRAME{#2}{#3}{#4}{#7}{#8}{#5}\else
  \ifnum\dispkind=\@ne\DFRAME{#2}{#3}{#7}{#8}{#5}\else
   \ifnum\dispkind=\tw@
    \edef\@tempa{\noexpand\FFRAME{\LaTeXparams}}%
    \@tempa{#2}{#3}{#5}{#6}{#7}{#8}%
    \fi
   \fi
  \fi
  \ifwasdraft\@msidraft=1\else\@msidraft=0\fi{}%
  \egroup
 }%
\def\TEXUX#1{"texux"}
\long\def\QQQ#1#2{%
     \long\expandafter\def\csname#1\endcsname{#2}}%
\long\def\QQA#1#2{}%
\def\QTR#1#2{{\csname#1\endcsname {#2}}}%
\def\EXPAND#1[#2]#3{}%
\def\NOEXPAND#1[#2]#3{}%
\def\LaTeXparent#1{}%
\def\ChildStyles#1{}%
\def\ChildDefaults#1{}%
\def\QTagDef#1#2#3{}%
  \providecommand{\UNICODE}[2][]{\protect\rule{.1in}{.1in}}
  \providecommand{\U}[1]{\protect\rule{.1in}{.1in}}
\def\QQfnmark#1{\footnotemark}
 \def\abstract{%
  \if@twocolumn
   \section*{Abstract (Not appropriate in this style!)}%
   \else \small
   \begin{center}{\bf Abstract\vspace{-.5em}\vspace{\z@}}\end{center}%
   \quotation
   \fi
  }%
   \def\registered{\relax\ifmmode{}\r@gistered
                    \else$\m@th\r@gistered$\fi}%
 \def\r@gistered{^{\ooalign
  {\hfil\raise.07ex\hbox{$\scriptstyle\rm\text{R}$}\hfil\crcr
  \mathhexbox20D}}}}{}%
\newdimen\theight
\def\newfmtname{LaTeX2e}
  \DeclareOldFontCommand{\rm}{\normalfont\rmfamily}{\mathrm}
  \DeclareOldFontCommand{\sf}{\normalfont\sffamily}{\mathsf}
  \DeclareOldFontCommand{\tt}{\normalfont\ttfamily}{\mathtt}
  \DeclareOldFontCommand{\bf}{\normalfont\bfseries}{\mathbf}
  \DeclareOldFontCommand{\it}{\normalfont\itshape}{\mathit}
  \DeclareOldFontCommand{\sl}{\normalfont\slshape}{\@nomath\sl}
  \DeclareOldFontCommand{\sc}{\normalfont\scshape}{\@nomath\sc}
\def\alpha{{\Greekmath 010B}}%
\def\beta{{\Greekmath 010C}}%
\def\gamma{{\Greekmath 010D}}%
\def\delta{{\Greekmath 010E}}%
\def\epsilon{{\Greekmath 010F}}%
\def\zeta{{\Greekmath 0110}}%
\def\eta{{\Greekmath 0111}}%
\def\theta{{\Greekmath 0112}}%
\def\iota{{\Greekmath 0113}}%
\def\kappa{{\Greekmath 0114}}%
\def\lambda{{\Greekmath 0115}}%
\def\mu{{\Greekmath 0116}}%
\def\nu{{\Greekmath 0117}}%
\def\xi{{\Greekmath 0118}}%
\def\pi{{\Greekmath 0119}}%
\def\rho{{\Greekmath 011A}}%
\def\sigma{{\Greekmath 011B}}%
\def\tau{{\Greekmath 011C}}%
\def\upsilon{{\Greekmath 011D}}%
\def\phi{{\Greekmath 011E}}%
\def\chi{{\Greekmath 011F}}%
\def\psi{{\Greekmath 0120}}%
\def\omega{{\Greekmath 0121}}%
\def\varepsilon{{\Greekmath 0122}}%
\def\vartheta{{\Greekmath 0123}}%
\def\varpi{{\Greekmath 0124}}%
\def\varrho{{\Greekmath 0125}}%
\def\varsigma{{\Greekmath 0126}}%
\def\varphi{{\Greekmath 0127}}%
\def\nabla{{\Greekmath 0272}}
\def\FindBoldGroup{%
   {\setbox0=\hbox{$\mathbf{x\global\edef\theboldgroup{\the\mathgroup}}$}}%
}
\def\Greekmath#1#2#3#4{%
    \if@compatibility
        \ifnum\mathgroup=\symbold
           \mathchoice{\mbox{\boldmath$\displaystyle\mathchar"#1#2#3#4$}}%
                      {\mbox{\boldmath$\textstyle\mathchar"#1#2#3#4$}}%
                      {\mbox{\boldmath$\scriptstyle\mathchar"#1#2#3#4$}}%
                      {\mbox{\boldmath$\scriptscriptstyle\mathchar"#1#2#3#4$}}%
        \else
           \mathchar"#1#2#3#4%
        \fi
    \else
        \FindBoldGroup
        \ifnum\mathgroup=\theboldgroup 
           \mathchoice{\mbox{\boldmath$\displaystyle\mathchar"#1#2#3#4$}}%
                      {\mbox{\boldmath$\textstyle\mathchar"#1#2#3#4$}}%
                      {\mbox{\boldmath$\scriptstyle\mathchar"#1#2#3#4$}}%
                      {\mbox{\boldmath$\scriptscriptstyle\mathchar"#1#2#3#4$}}%
        \else
           \mathchar"#1#2#3#4%
        \fi
      \fi}
\newif\ifGreekBold  \GreekBoldfalse
\let\SAVEPBF=\pbf
\def\pbf{\GreekBoldtrue\SAVEPBF}%
  \newcounter{equationnumber}
  \def\mathletters{%
     \addtocounter{equation}{1}
     \edef\@currentlabel{\theequation}%
     \setcounter{equationnumber}{\c@equation}
     \setcounter{equation}{0}%
     \edef\theequation{\@currentlabel\noexpand\alph{equation}}%
  }
    \def\BibTeX{{\rm B\kern-.05em{\sc i\kern-.025em b}\kern-.08em
                 T\kern-.1667em\lower.7ex\hbox{E}\kern-.125emX}}}{}%
\def\AmS{{\protect\usefont{OMS}{cmsy}{m}{n}%
                A\kern-.1667em\lower.5ex\hbox{M}\kern-.125emS}}}{}%
\def\@@eqncr{\let\@tempa\relax
    \ifcase\@eqcnt \def\@tempa{& & &}\or \def\@tempa{& &}%
      \else \def\@tempa{&}\fi
     \@tempa
     \if@eqnsw
        \iftag@
           \@taggnum
        \else
           \@eqnnum\stepcounter{equation}%
        \fi
     \fi
     \global\tag@false
     \global\@eqnswtrue
     \global\@eqcnt\z@\cr}
\def\TCItag{\@ifnextchar*{\@TCItagstar}{\@TCItag}}
\def\@TCItag#1{%
    \global\tag@true
    \global\def\@taggnum{(#1)}}
\def\@TCItagstar*#1{%
    \global\tag@true
    \global\def\@taggnum{#1}}
\def\ExitTCILatex{\makeatother }
\let\DOTSI\relax
\def\RIfM@{\relax\ifmmode}%
\def\FN@{\futurelet\next}%
\def\iint{\DOTSI\intno@\tw@\FN@\ints@}%
\def\iiint{\DOTSI\intno@\thr@@\FN@\ints@}%
\def\iiiint{\DOTSI\intno@4 \FN@\ints@}%
\def\idotsint{\DOTSI\intno@\z@\FN@\ints@}%
\def\ints@{\findlimits@\ints@@}%
\newif\iflimtoken@
\newif\iflimits@
\def\findlimits@{\limtoken@true\ifx\next\limits\limits@true
 \else\ifx\next\nolimits\limits@false\else
 \limtoken@false\ifx\ilimits@\nolimits\limits@false\else
 \ifinner\limits@false\else\limits@true\fi\fi\fi\fi}%
\def\multint@{\int\ifnum\intno@=\z@\intdots@                          
 \else\intkern@\fi                                                    
 \ifnum\intno@>\tw@\int\intkern@\fi                                   
 \ifnum\intno@>\thr@@\int\intkern@\fi                                 
 \int}
\def\multintlimits@{\intop\ifnum\intno@=\z@\intdots@\else\intkern@\fi
 \ifnum\intno@>\tw@\intop\intkern@\fi
 \ifnum\intno@>\thr@@\intop\intkern@\fi\intop}%
\def\intic@{%
    \mathchoice{\hskip.5em}{\hskip.4em}{\hskip.4em}{\hskip.4em}}%
\def\negintic@{\mathchoice
 {\hskip-.5em}{\hskip-.4em}{\hskip-.4em}{\hskip-.4em}}%
\def\ints@@{\iflimtoken@                                              
 \def\ints@@@{\iflimits@\negintic@
   \mathop{\intic@\multintlimits@}\limits                             
  \else\multint@\nolimits\fi                                          
  \eat@}
 \else                                                                
 \def\ints@@@{\iflimits@\negintic@
  \mathop{\intic@\multintlimits@}\limits\else
  \multint@\nolimits\fi}\fi\ints@@@}%
\def\intkern@{\mathchoice{\!\!\!}{\!\!}{\!\!}{\!\!}}%
\def\plaincdots@{\mathinner{\cdotp\cdotp\cdotp}}%
\def\intdots@{\mathchoice{\plaincdots@}%
 {{\cdotp}\mkern1.5mu{\cdotp}\mkern1.5mu{\cdotp}}%
 {{\cdotp}\mkern1mu{\cdotp}\mkern1mu{\cdotp}}%
 {{\cdotp}\mkern1mu{\cdotp}\mkern1mu{\cdotp}}}%
\def\RIfM@{\relax\protect\ifmmode}
\def\text{\RIfM@\expandafter\text@\else\expandafter\mbox\fi}
\let\nfss@text\text
\def\text@#1{\mathchoice
   {\textdef@\displaystyle\f@size{#1}}%
   {\textdef@\textstyle\tf@size{\firstchoice@false #1}}%
   {\textdef@\textstyle\sf@size{\firstchoice@false #1}}%
   {\textdef@\textstyle \ssf@size{\firstchoice@false #1}}%
   \glb@settings}
\def\textdef@#1#2#3{\hbox{{%
                    \everymath{#1}%
                    \let\f@size#2\selectfont
                    #3}}}
\newif\iffirstchoice@
\def\Let@{\relax\iffalse{\fi\let\\=\cr\iffalse}\fi}%
\def\vspace@{\def\vspace##1{\crcr\noalign{\vskip##1\relax}}}%
\def\multilimits@{\bgroup\vspace@\Let@
 \baselineskip\fontdimen10 \scriptfont\tw@
 \advance\baselineskip\fontdimen12 \scriptfont\tw@
 \lineskip\thr@@\fontdimen8 \scriptfont\thr@@
 \lineskiplimit\lineskip
 \vbox\bgroup\ialign\bgroup\hfil$\m@th\scriptstyle{##}$\hfil\crcr}%
\def\Sb{_\multilimits@}%
\def\endSb{\crcr\egroup\egroup\egroup}%
\def\Sp{^\multilimits@}%
\newdimen\ex@
\def\rightarrowfill@#1{$#1\m@th\mathord-\mkern-6mu\cleaders
 \hbox{$#1\mkern-2mu\mathord-\mkern-2mu$}\hfill
 \mkern-6mu\mathord\rightarrow$}%
\def\leftarrowfill@#1{$#1\m@th\mathord\leftarrow\mkern-6mu\cleaders
 \hbox{$#1\mkern-2mu\mathord-\mkern-2mu$}\hfill\mkern-6mu\mathord-$}%
\def\leftrightarrowfill@#1{$#1\m@th\mathord\leftarrow
\mkern-6mu\cleaders
 \hbox{$#1\mkern-2mu\mathord-\mkern-2mu$}\hfill
 \mkern-6mu\mathord\rightarrow$}%
\def\overrightarrow{\mathpalette\overrightarrow@}%
\def\overrightarrow@#1#2{\vbox{\ialign{##\crcr\rightarrowfill@#1\crcr
 \noalign{\kern-\ex@\nointerlineskip}$\m@th\hfil#1#2\hfil$\crcr}}}%
\def\overleftarrow{\mathpalette\overleftarrow@}%
\def\overleftarrow@#1#2{\vbox{\ialign{##\crcr\leftarrowfill@#1\crcr
 \noalign{\kern-\ex@\nointerlineskip}$\m@th\hfil#1#2\hfil$\crcr}}}%
\def\overleftrightarrow{\mathpalette\overleftrightarrow@}%
\def\overleftrightarrow@#1#2{\vbox{\ialign{##\crcr
   \leftrightarrowfill@#1\crcr
 \noalign{\kern-\ex@\nointerlineskip}$\m@th\hfil#1#2\hfil$\crcr}}}%
\def\underrightarrow{\mathpalette\underrightarrow@}%
\def\underrightarrow@#1#2{\vtop{\ialign{##\crcr$\m@th\hfil#1#2\hfil
  $\crcr\noalign{\nointerlineskip}\rightarrowfill@#1\crcr}}}%
\def\underleftarrow{\mathpalette\underleftarrow@}%
\def\underleftarrow@#1#2{\vtop{\ialign{##\crcr$\m@th\hfil#1#2\hfil
  $\crcr\noalign{\nointerlineskip}\leftarrowfill@#1\crcr}}}%
\def\underleftrightarrow{\mathpalette\underleftrightarrow@}%
\def\underleftrightarrow@#1#2{\vtop{\ialign{##\crcr$\m@th
  \hfil#1#2\hfil$\crcr
 \noalign{\nointerlineskip}\leftrightarrowfill@#1\crcr}}}%
\def\qopnamewl@#1{\mathop{\operator@font#1}\nlimits@}
\let\nlimits@\displaylimits
\def\setboxz@h{\setbox\z@\hbox}
\def\varlim@#1#2{\mathop{\vtop{\ialign{##\crcr
 \hfil$#1\m@th\operator@font lim$\hfil\crcr
 \noalign{\nointerlineskip}#2#1\crcr
 \noalign{\nointerlineskip\kern-\ex@}\crcr}}}}
 \def\rightarrowfill@#1{\m@th\setboxz@h{$#1-$}\ht\z@\z@
  $#1\copy\z@\mkern-6mu\cleaders
  \hbox{$#1\mkern-2mu\box\z@\mkern-2mu$}\hfill
  \mkern-6mu\mathord\rightarrow$}
\def\leftarrowfill@#1{\m@th\setboxz@h{$#1-$}\ht\z@\z@
  $#1\mathord\leftarrow\mkern-6mu\cleaders
  \hbox{$#1\mkern-2mu\copy\z@\mkern-2mu$}\hfill
  \mkern-6mu\box\z@$}
\def\projlim{\qopnamewl@{proj\,lim}}
\def\injlim{\qopnamewl@{inj\,lim}}
\def\varinjlim{\mathpalette\varlim@\rightarrowfill@}
\def\varprojlim{\mathpalette\varlim@\leftarrowfill@}
\def\varliminf{\mathpalette\varliminf@{}}
\def\varliminf@#1{\mathop{\underline{\vrule\@depth.2\ex@\@width\z@
   \hbox{$#1\m@th\operator@font lim$}}}}
\def\varlimsup{\mathpalette\varlimsup@{}}
\def\varlimsup@#1{\mathop{\overline
  {\hbox{$#1\m@th\operator@font lim$}}}}
\def\align{\@verbatim \frenchspacing\@vobeyspaces \@alignverbatim
You are using the "align" environment in a style in which it is not defined.}
\let\csname endalign*\endcsname =\endtrivlist
\def\alignat{\@verbatim \frenchspacing\@vobeyspaces \@alignatverbatim
You are using the "alignat" environment in a style in which it is not defined.}
\let\csname endalignat*\endcsname =\endtrivlist
\def\xalignat{\@verbatim \frenchspacing\@vobeyspaces \@xalignatverbatim
You are using the "xalignat" environment in a style in which it is not defined.}
\let\csname endxalignat*\endcsname =\endtrivlist
\def\gather{\@verbatim \frenchspacing\@vobeyspaces \@gatherverbatim
You are using the "gather" environment in a style in which it is not defined.}
\let\csname endgather*\endcsname =\endtrivlist
\def\multiline{\@verbatim \frenchspacing\@vobeyspaces \@multilineverbatim
You are using the "multiline" environment in a style in which it is not defined.}
\let\csname endmultiline*\endcsname =\endtrivlist
\def\arrax{\@verbatim \frenchspacing\@vobeyspaces \@arraxverbatim
You are using a type of "array" construct that is only allowed in AmS-LaTeX.}
\def\tabulax{\@verbatim \frenchspacing\@vobeyspaces \@tabulaxverbatim
You are using a type of "tabular" construct that is only allowed in AmS-LaTeX.}
\let\csname endarrax*\endcsname =\endtrivlist
\let\csname endtabulax*\endcsname =\endtrivlist
 \def\endequation{%
     \ifmmode\ifinner 
      \iftag@
        \addtocounter{equation}{-1} 
        $\hfil
           \displaywidth\linewidth\@taggnum\egroup \endtrivlist
        \global\tag@false
        \global\@ignoretrue
      \else
        $\hfil
           \displaywidth\linewidth\@eqnnum\egroup \endtrivlist
        \global\tag@false
        \global\@ignoretrue
      \fi
     \else
      \iftag@
        \addtocounter{equation}{-1} 
        \eqno \hbox{\@taggnum}
        \global\tag@false%
        $$\global\@ignoretrue
      \else
        \eqno \hbox{\@eqnnum}
        $$\global\@ignoretrue
      \fi
     \fi\fi
 }
 \newif\iftag@ \tag@false
 \def\TCItag{\@ifnextchar*{\@TCItagstar}{\@TCItag}}
 \def\@TCItag#1{%
     \global\tag@true
     \global\def\@taggnum{(#1)}}
 \def\@TCItagstar*#1{%
     \global\tag@true
     \global\def\@taggnum{#1}}
     \def\tag{\@ifnextchar*{\@tagstar}{\@tag}}
     \def\@tag#1{%
         \global\tag@true
         \global\def\@taggnum{(#1)}}
     \def\@tagstar*#1{%
         \global\tag@true
         \global\def\@taggnum{#1}}
\begin{document}
\def\cprime{$'$}
\def\cprime{$'$}

\title[A (rough) pathwise approach to nonlinear SPDEs]{A (rough) pathwise
approach to a class of nonlinear stochastic partial differential equations}
\author{Michael Caruana, Peter K. Friz and Harald Oberhauser}
\address{MC\ is affiliated to King Fahd University of Petroleum and
Minerals. HO is affiliated to TU Berlin. PKF is corresponding author
(friz@math.tu-berlin.de) and affiliated to TU and WIAS Berlin. }

\begin{abstract}
We consider nonlinear parabolic evolution equations of the form $\partial
_{t}u=F\left( t,x,Du,D^{2}u\right) $, subject to noise of the form $H\left(
x,Du\right) \circ dB$ where $H$ is linear in $Du$ and $\circ dB$ denotes the
Stratonovich differential of a multidimensional Brownian motion. Motivated
by the essentially pathwise results of [Lions, P.-L. and Souganidis, P.E.;
Fully nonlinear stochastic partial differential equations. C. R. Acad. Sci.
Paris S\'{e}r. I Math. 326 (1998), no. 9] we propose the use of rough path
analysis [Lyons, T. J.; Differential equations driven by rough signals. Rev.
Mat. Iberoamericana 14 (1998), no. 2, 215--310] in this context. Although
the core arguments are entirely deterministic, a continuity theorem allows
for various probabilistic applications (limit theorems, support, large
deviations, ...).
\end{abstract}

\keywords{parabolic viscosity PDEs, stochastic PDEs, rough path theory.}
\maketitle

\section{Introduction}

Let us recall some basic ideas of (second order) viscosity theory \cite%
{MR1118699UserGuide, MR2179357FS} and rough path theory \cite{lyons-qian-02,
MR2314753}.\ As for viscosity theory, consider a real-valued function $%
u=u\left( x\right) $ with $x\in \mathbb{R}^{n}$ and assume $u\in C^{2}$ is a
classical supersolution,%
\begin{equation*}
-G\left( x,u,Du,D^{2}u\right) \geq 0,
\end{equation*}%
where $G$ is a (continuous) function, \textit{degenerate elliptic} in the
sense that $G\left( x,u,p,A\right) \leq G\left( x,u,p,A+B\right) $ whenever $%
B\geq 0$ in the sense of symmetric matrices. The idea is to consider a
(smooth) test function $\varphi $ which touches $u$ from below at some point 
$\bar{x}$. Basic calculus implies that $Du\left( \bar{x}\right) =D\varphi
\left( \bar{x}\right) ,\,D^{2}u\left( \bar{x}\right) \geq D^{2}\varphi
\left( \bar{x}\right) $ and, from degenerate ellipticity,%
\begin{equation}
-G\left( \bar{x},\varphi ,D\varphi ,D^{2}\varphi \right) \,\geq 0.
\label{Gxbar}
\end{equation}%
This suggests to define a \textit{viscosity supersolution} (at the point $%
\bar{x}$) to $-G=0$ as a (lower semi-)continuous function $u$ with the
property that (\ref{Gxbar}) holds for any test function which touches $u$
from below at $\bar{x}$. Similarly, \textit{viscosity subsolutions} are
(upper semi-)continuous functions defined via test functions touching $u$
from above and by reversing inequality in (\ref{Gxbar}); \textit{viscosity
solutions} are both super- and subsolutions. Observe that this definition
covers (completely degenerate) first order equations as well as parabolic
equations, e.g. by considering $\partial _{t}-F=0$ on $\mathbb{R}^{+}\times 
\mathbb{R}^{n}$ where $F$ is degenerate elliptic. The resulting theory
(existence, uniqueness, stability, ...) is without doubt one of most
important recent developments in the field of partial differential
equations. As a typical result\footnote{$\mathrm{BUC}\left( \dots \right) $
denotes the space of bounded, uniformly continuous functions; $\mathrm{BC}%
\left( \dots \right) $ denotes the space of bounded, continuous functions.},
one has existence and uniqueness result in the class of bounded solutions to
the initial value problem $\left( \partial _{t}-F\right) u=0,\,u\left(
0,\cdot \right) =u_{0}\in \mathrm{BUC}\left( \mathbb{R}^{n}\right) $,
provided $F=F(t,x,Du,D^{2}u)$ is continuous, degenerate elliptic and
satisfies a (well-known) technical condition (see condition \ref{UG314}
below). In fact, uniqueness follows from a stronger property known as 
\textit{comparison:} assume $u$ (resp. $v$) is a subsolution (resp.
supersolution) and $u_{0}\leq v_{0}$; then $u\leq v$ on $[0,T)\times \mathbb{%
R}^{n}$. A key feature of viscosity theory is what workers in the field
simply call \textit{stability properties}. For instance, it is relatively
straight-forward to study $\left( \partial _{t}-F\right) u=0$ via a sequence
of approximate problems, say $\left( \partial _{t}-F^{n}\right) u^{n}=0$,
provided $F^{n}\rightarrow F$ locally uniformly and some apriori information
on the $u^{n}$ (e.g. locally uniform convergence, or locally uniform
boundedness\footnote{%
What we have in mind here is the \textit{Barles--Perthame method of
semi-relaxed limits}. We shall use this method in the proof of theorem \ref%
{main} and postpone precise references until then.}). Note the stark
contrast to the classical theory where one has to control the actual
deriviatives of $u^{n}$.

\bigskip

The idea of stability is also central to \textit{rough path theory}. Given a
collection $\left( V_{1},\dots ,V_{d}\right) $ of (sufficiently nice) vector
fields on $\mathbb{R}^{n}$ and $z\in C^{1}\left( \left[ 0,T\right] ,\mathbb{R%
}^{d}\right) $ one considers the (unique) solution $y$ to the ordinary
differential equation%
\begin{equation}
\dot{y}\left( t\right) =\sum_{i=1}^{d}V_{i}\left( y\right) \dot{z}^{i}\left(
t\right) ,\,\,\,y\left( 0\right) =y_{0}\in \mathbb{R}^{n}\text{.}
\label{ODEintro}
\end{equation}%
The question is, if the output signal $y$ depends in a stable way on the
driving signal $z$. The answer, of course, depends strongly on how to
measure distance between input signals. If one uses the supremum norm, so
that the distance between driving signals $z,\tilde{z}$ is given by $%
\left\vert z-\tilde{z}\right\vert _{\infty ;\left[ 0,T\right] }$, then the
solution will in general \textit{not }depend continuously on the input.

\begin{example}
\label{ODEnotContInInfity}\bigskip Take $n=1,d=2,\,V=\left(
V_{1},V_{2}\right) =\left( \sin \left( \cdot \right) ,\cos \left( \cdot
\right) \right) $ and $y_{0}=0$. Obviously,%
\begin{equation*}
z^{n}\left( t\right) =\left( \frac{1}{n}\cos \left( 2\pi n^{2}t\right) ,%
\frac{1}{n}\sin \left( 2\pi n^{2}t\right) \right)
\end{equation*}%
converges to $0$ in $\infty $-norm whereas the solutions to $\dot{y}%
^{n}=V\left( y^{n}\right) \dot{z}^{n},y_{0}^{n}=0,$ do not converge to zero
(the solution to the limiting equation $\dot{y}=0$).
\end{example}

If $\left\vert z-\tilde{z}\right\vert _{\infty ;\left[ 0,T\right] }$ is
replaced by the (much) stronger distance 
\begin{equation*}
\left\vert z-\tilde{z}\right\vert _{1\text{-var};\left[ 0,T\right]
}=\sup_{\left( t_{i}\right) \subset \left[ 0,T\right] }\sum \left\vert
z_{t_{i},t_{i+1}}-\tilde{z}_{t_{i},t_{i+1}}\right\vert ,
\end{equation*}%
it is elementary to see that now the solution map is continuous (in fact,
locally Lipschitz); however, this continuity does not lend itself to push
the meaning of (\ref{ODEintro}): the closure of $C^{1}$ (or smooth) paths in
variation is precisely $W^{1,1}$, the set of absolutely continuous paths
(and thus still far from a typical Brownian path). Lyons' theory of rough
paths exhibits an entire cascade of ($p$-variation or $1/p$-H\"{o}lder type
rough path) metrics, for each $p\in \lbrack 1,\infty )$, on path-space under
which such ODE solutions are continuous (and even locally Lipschitz)
functions of their driving signal. For instance, the "rough path" $p$%
-variation distance between two smooth $\mathbb{R}^{d}$-valued paths $z,%
\tilde{z}$ is given by%
\begin{equation*}
\max_{j=1,\dots ,\left[ p\right] }\left( \sup_{\left( t_{i}\right) \subset %
\left[ 0,T\right] }\sum \left\vert z_{t_{i},t_{i+1}}^{\left( j\right) }-%
\tilde{z}_{t_{i},t_{i+1}}^{\left( j\right) }\right\vert ^{p}\right) ^{1/p}
\end{equation*}%
where $z_{s,t}^{\left( j\right) }=\int dz_{r_{1}}\otimes \dots \otimes
dz_{r_{j}}$ with integration over the $j$-dimensional simplex $\left\{
s<r_{1}<\dots <r_{j}<t\right\} $. This allows to extend the very meaning of (%
\ref{ODEintro}), in a unique and continuous fashion, to driving signals
which live in the abstract completion of smooth $\mathbb{R}^{d}$-valued
paths (with respect to rough path $p$-variation or a similarly defined $1/p$%
-H\"{o}lder metric). The space of so-called $p$-rough paths\footnote{%
In the strict terminology of rough path theory: geometric $p$-rough paths.}
is precisely this abstract completion. In fact, this space can be realized
as genuine path space, 
\begin{equation*}
C^{0,p\text{-var}}\left( \left[ 0,T\right] ,G^{\left[ p\right] }\left( 
\mathbb{R}^{d}\right) \right) \text{ \ resp. }C^{0,1/p\text{-H\"{o}l}}\left( %
\left[ 0,T\right] ,G^{\left[ p\right] }\left( \mathbb{R}^{d}\right) \right)
\end{equation*}%
where $G^{\left[ p\right] }\left( \mathbb{R}^{d}\right) $ is the free step-$%
\left[ p\right] $ nilpotent group over $\mathbb{R}^{d}$, equipped with
Carnot--Caratheodory metric; realized as a subset of $1+\mathfrak{t}^{\left[
p\right] }\left( \mathbb{R}^{d}\right) $ where%
\begin{equation*}
\mathfrak{t}^{\left[ p\right] }\left( \mathbb{R}^{d}\right) =\mathbb{R}%
^{d}\oplus \left( \mathbb{R}^{d}\right) ^{\otimes 2}\oplus \dots \oplus
\left( \mathbb{R}^{d}\right) ^{\otimes \left[ p\right] }
\end{equation*}%
is the natural state space for (up to $\left[ p\right] $) iterated integrals
of a smooth $\mathbb{R}^{d}$-valued path. For instance, almost every
realization of $d$-dimensional Brownian motion $B$ \textit{enhanced with its
iterated stochastic integrals in the sense of\ Stratonovich, }i.e. the
matrix-valued process given by%
\begin{equation}
B^{\left( 2\right) }:=\left( \int_{0}^{\cdot }B^{i}\circ dB^{j}\right)
_{i,j\in \left\{ 1,\dots ,d\right\} }  \label{StratoII_intro}
\end{equation}%
yields a path $\mathbf{B}\left( \omega \right) $ in $G^{2}\left( \mathbb{R}%
^{d}\right) $ with finite $1/p$-H\"{o}lder (and hence finite $p$-variation)
regularity, for any $p>2$. ($\mathbf{B}$ is known as \textit{Brownian rough
path.}) We remark that $B^{\left( 2\right) }=\frac{1}{2}B\otimes B+A$ where $%
A:=\mathrm{Anti}\left( B^{\left( 2\right) }\right) $ is known as \textit{L%
\'{e}vy's stochastic area;} in other words $\mathbf{B}\left( \omega \right) $
is determined by $\left( B,A\right) $, i.e. Brownian motion \textit{enhanced
with L\'{e}vy's area.}

Turning to the main topic of this paper, we follow \cite{MR1647162,
MR1659958, MR1807189} in considering a real-valued function of time and
space $u=u\left( t,x\right) \in \mathrm{BC}\left( [0,T)\times \mathbb{R}%
^{n}\right) $ which solves the nonlinear partial differential equation 
\begin{eqnarray}
du &=&F\left( t,x,Du,D^{2}u\right) dt+\sum_{i=1}^{d}H_{i}\left( x,Du\right)
dz^{i}  \label{PDE} \\
&\equiv &F\left( t,x,Du,D^{2}u\right) dt+H\left( x,Du\right) dz  \notag
\end{eqnarray}%
in viscosity sense. When $z:\left[ 0,T\right] \rightarrow \mathbb{R}^{d}$ is 
$C^{1}$ then, subject to suitable conditions on $F$ and $H$, this falls in
the standard setting of viscosity theory as discussed above. This can be
pushed further to $z\in W^{1,1}$ (see e.g. \cite[Remark 4]{MR1647162} and
the references given there) but the case when $z=z\left( t\right) $ has only
"Brownian" regularity (just below $1/2$-H\"{o}lder, say) falls dramatically
outside the scope of the standard theory. The reader can find a variety of
examples (drawing from fields as diverse as stochastic control theory,
pathwise stochastic control, interest rate theory, front propagation and
phase transition in random media, ...) in the articles \cite{MR1659958,
MR1959710} justifying the need of a theory of (non-linear)\ \textit{%
stochastic partial differential equations} (SPDEs) in which $z$ in\ (\ref%
{PDE}) is taken as a Brownian motion\footnote{%
... in which case (\ref{PDE}) is understood in Stratonovich form.}. In the
same series of articles a satisfactory theory is established for the case of
non-linear Hamiltonian with no spatial dependence, i.e. $H=H\left( Du\right) 
$. The contribution of this article is to deal with non-linear $F$ and $%
H=H\left( x,Du\right) $, linear in $Du$, although we suspect that the
marriage of rough path and viscosity methodology will also prove useful in
further investigations on \textit{fully nonlinear} (i.e. both $F$ and $H$)
stochastic partial differential equations\footnote{%
The use of rough path analysis in the context of nonlinear SPDEs was
verbally conjectured by P.L. Lions in his 2003 Courant lecture.}. To fix
ideas, we give the following example, suggested in \cite{MR1659958} and
carefully worked out in \cite{MR1920103, MR2285722}.

\begin{example}[Pathwise stochastic control]
\label{ExStochControl}Consider%
\begin{equation*}
dX=b\left( X;\alpha \right) dt+W\left( X;\alpha \right) \circ d\tilde{B}%
+V\left( X\right) \circ dB,
\end{equation*}%
where $b,W,V$ are (collections of) sufficiently nice vector fields (with $%
b,W $ dependent on a suitable control $\alpha =\alpha \left( t\right) \in 
\mathcal{A}$, applied at time $t$) and $\tilde{B},B$ are multi-dimensional
(independent)\ Brownian motions. Define\footnote{%
Remark that any optimal control $\alpha \left( \cdot \right) $ here will
depend on knowledge of the entire path of $B$. Such anticipative control
problems and their link to classical stochastic control problems were
discussed early on by Davis and Burnstein \cite{MR1275134}.}%
\begin{equation*}
v\left( x,t;B\right) =\inf_{\alpha \in \mathcal{A}}\mathbb{E}\left[ \left.
\left( g\left( X_{T}^{x,t}\right) +\int_{t}^{T}f\left( X_{s}^{x,t},\alpha
_{s}\right) ds\right) \right\vert B\right]
\end{equation*}%
where $X^{x,t}$ denotes the solution process to the above SDE started at $%
X\left( t\right) =x$. Then, at least by a formal computation,%
\begin{equation*}
dv+\inf_{\alpha \in \mathcal{A}}\left[ b\left( x,\alpha \right)
Dv+L_{a}v+f\left( x,\alpha \right) \right] dt+Dv\cdot V\left( x\right) \circ
dB=0
\end{equation*}%
with terminal data $v\left( \cdot ,T\right) \equiv g$, and $L_{\alpha }=\sum
W_{i}^{2}$ in H\"{o}rmander form. Setting $u\left( x,t\right) =v\left(
x,T-t\right) $ turns this into the initial value (Cauchy) problem, 
\begin{equation*}
du=\inf_{\alpha \in \mathcal{A}}\left[ b\left( x,\alpha \right)
Du+L_{a}u+f\left( x,\alpha \right) \right] dt+Du\cdot V\left( x\right) \circ
dB_{T-\cdot }
\end{equation*}%
with initial data $\,u\left( \cdot ,0\right) \equiv g$; and hence of a form
which is covered by theorem \ref{main} below. Indeed, $H=\left(
H_{1},H_{2}\right) $, $H_{i}\left( x,p\right) =p\cdot V_{i}\left( x\right) $%
, is linear in $p$. (Moreover, the rough driving signal in theorem \ref{main}
is taken as $\mathbf{z}_{t}:=\mathbf{B}_{T-t}\left( \omega \right) $ where $%
\mathbf{B}\left( \omega \right) $ is a fixed Brownian rough path, run
backwards in time.\footnote{%
Alternatively, the proof of theorem \ref{main} is trivially modified to
directly accomodate terminal data problems.})
\end{example}

Returning to the general setup of (\ref{PDE}), the results \cite{MR1647162,
MR1659958, MR1807189} are in fact \textit{pathwise} and apply to any
continuous path $z\in C\left( \left[ 0,T\right] ,\mathbb{R}^{d}\right) $,
this includes Brownian and even rougher sources of noise; however, the
assumption was made that $H=H\left( Du\right) $ is independent of $x$. The r%
\^{o}le of $x$-dependence is an important one (as it arises in applications
such as example \ref{ExStochControl}): the results of Lions--Souganidis
imply that the map%
\begin{equation*}
z\in C^{1}\left( \left[ 0,T\right] ,\mathbb{R}^{d}\right) \mapsto u\left(
\cdot ,\cdot \right) \in C\left( \left[ 0,T\right] ,\mathbb{R}^{n}\right)
\end{equation*}%
depends continuously on $z$ \textit{in uniform topology}; thereby giving
existence/uniqueness results to%
\begin{equation*}
du=F\left( t,x,Du,D^{2}u\right) dt+\sum_{i=1}^{d}H_{i}\left( Du\right) dz^{i}
\end{equation*}%
for \textit{every} continuous path $z:\left[ 0,T\right] \rightarrow \mathbb{R%
}^{d}$. When the Hamiltonian depends on $x$, this ceases to be true; indeed,
take $F\equiv 0,d=2$ and $H_{i}\left( x,p\right) =pV_{i}\left( x\right) $
where $V_{1},V_{2}$ are the vector fields from example \ref%
{ODEnotContInInfity}. Solving the characteristic equations shows that $u$ is
expressed in terms of the (inverse) flow associated to $dy=V_{1}\left(
y\right) dz^{1}+V_{2}\left( y\right) dz^{2}$, and we have already seen that
the solution of this ODE\ does not depend continuously on $z=\left(
z^{1},z^{2}\right) $ in uniform topology\footnote{%
We shall push this remark much further in theorem \ref{SPDEsussmann} below.}.

Of course, this type of problem can be prevented by strengthening the
topology: the Lyons' theory of rough paths does exhibit an entire cascade of
($p$-variation or $1/p$-H\"{o}lder type rough path) metrics (for each $p\geq
1$) on path-space under which such ODE solutions are continuous functions of
their driving signal. This suggests to extend the Lions--Souganidis theory
from a pathwise to a \textit{rough} pathwise theory. We shall do so for a
rich class of fully-nonlinear $F$ and Hamiltonians $H\left( x,Du\right) $
linear in $Du$. This last assumption allows for a global change of
coordinates which mimicks a classical trick in SPDE analysis (which, to the
best of our knowledge, goes back to Tubaro \cite{MR940902}, Kunita \cite[%
Chapter 6]{MR1472487} and Rozovski{\u{\i}} \cite{MR743902}, see also
Iftimie--Varsan \cite{MR1992898}; similar techniques have also proven useful
when $H=H\left( x,u\right) $ - we shall comment on this in section \ref%
{FurtherRmks}) where a SPDE is transformed into a random PDE (i.e. one that
can be solved with deterministic methods by fixing the randomness). In doing
so, the interplay between rough path and viscosity methods is illustrated in
a transparent way and everything boils down to combine the stability
properties of viscosity solution with those of differential equations in the
rough path sense. We have the following result.\footnote{%
Unless otherwise stated we shall always equip the spaces $\mathrm{BC}$ and $%
\mathrm{BUC}$ with the topology of locally uniform convergence.}\ 

\begin{theorem}
\label{main}Let $p\geq 1$ and $\left( z^{\varepsilon }\right) \subset
C^{\infty }\left( \left[ 0,T\right] ,\mathbb{R}^{d}\right) $ be Cauchy in ($%
p $-variation) rough path topology with rough path limit $\mathbf{z}\in
C^{0,p\text{-var}}\left( \left[ 0,T\right] ,G^{\left[ p\right] }\left( 
\mathbb{R}^{d}\right) \right) $. Assume%
\begin{equation*}
u_{0}^{\varepsilon }\in \mathrm{BUC}\left( \mathbb{R}^{n}\right) \rightarrow
u_{0}\in \mathrm{BUC}\left( \mathbb{R}^{n}\right) \text{,}
\end{equation*}%
locally uniformly as $\varepsilon \rightarrow 0$. Let $F=F\left(
t,x,p,X\right) $ be continuous, degenerate elliptic, and assume that $%
\partial _{t}-F$ satisfies $\Phi ^{\left( 3\right) }$-invariant comparison
(cf. definition \ref{PhiInvComp} below). Assume that $V=\left( V_{1},\dots
,V_{d}\right) $ is a collection of $\mathrm{Lip}^{\gamma +2}\left( \mathbb{R}%
^{n};\mathbb{R}^{n}\right) $ vector fields with $\gamma >p$. Assume
existence of (necessarily unique\footnote{%
This follows from the first 5 lines in the proof of this theorem.})
viscosity solutions $u^{\varepsilon }\in \mathrm{BC}\left( [0,T)\times 
\mathbb{R}^{n}\right) $ to%
\begin{eqnarray}
du^{\varepsilon } &=&F\left( t,x,Du^{\varepsilon },D^{2}u^{\varepsilon
}\right) dt-Du^{\varepsilon }\left( t,x\right) \cdot V\left( x\right)
dz^{\varepsilon }\left( t\right) ,  \label{espPDE} \\
\,\,\,u^{\varepsilon }\left( 0,\cdot \right) &=&u_{0}^{\varepsilon }
\end{eqnarray}%
and assume that the resulting family $\left( u^{\varepsilon }:\varepsilon
>0\right) $ is uniformly bounded\footnote{%
A simple sufficient conditions is boundedness of $F\left( \cdot ,\cdot
,0,0\right) $ on $\left[ 0,T\right] \times \mathbb{R}^{n}$, and the
assumption that $u_{0}^{\varepsilon }\rightarrow u_{0}$ uniformly, as can be
seen by comparison with function of the type $\left( t,x\right) \mapsto \pm
C\left( t+1\right) $, with sufficiently large $C.$%
\par
{}}. Then\newline
(i) there exists a unique $u\in \mathrm{BC}\left( [0,T)\times \mathbb{R}%
^{n}\right) $, only dependent on $\mathbf{z}$ and $u_{0}$ but not on the
particular approximating sequences, such that $u^{\varepsilon }\rightarrow u$
locally uniformly. We write (formally)%
\begin{eqnarray}
du &=&F\left( t,x,Du,D^{2}u\right) dt-Du\left( t,x\right) \cdot V\left(
x\right) d\mathbf{z}\left( t\right) ,  \label{RPDE} \\
\,\,\,u\left( 0,\cdot \right) &=&u_{0},
\end{eqnarray}%
and also $u=u^{\mathbf{z}}$ when we want to indicate the dependence on $%
\mathbf{z}$;\newline
(ii) we have the contraction property%
\begin{equation*}
\left\vert u^{\mathbf{z}}-\hat{u}^{\mathbf{z}}\right\vert _{\infty ;\mathbb{R%
}^{n}\times \left[ 0,T\right] }\leq \left\vert u_{0}-\hat{u}_{0}\right\vert
_{\infty ;\mathbb{R}^{n}}
\end{equation*}%
where $\hat{u}^{\mathbf{z}}$ is defined as limit of $\hat{u}^{\eta }$,
defined as in (\ref{espPDE}) with $u^{\varepsilon }$ replaced by $\hat{u}%
^{\eta }$ throughout;\newline
(iii) the solution map $\left( \mathbf{z,}u_{0}\right) \mapsto u^{\mathbf{z}%
} $ from 
\begin{equation*}
C^{p\text{-var}}\left( \left[ 0,T\right] ,G^{\left[ p\right] }\left( \mathbb{%
R}^{d}\right) \right) \times \mathrm{BUC}\left( \mathbb{R}^{n}\right)
\rightarrow \mathrm{BC}\left( [0,T)\times \mathbb{R}^{n}\right)
\end{equation*}%
is continuous.
\end{theorem}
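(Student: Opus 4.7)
The plan is to convert the stochastic transport term into a pure change of coordinates, reducing \eqref{espPDE} to a deterministic viscosity problem whose coefficients depend only on the flow of $V$ driven by $z^{\varepsilon}$. Concretely, let $\phi^{\varepsilon}_{t}$ denote the flow (on $\mathbb{R}^{n}$) of the ODE $\dot{y}=V(y)\dot{z}^{\varepsilon}$, and set $v^{\varepsilon}(t,x):=u^{\varepsilon}(t,\phi^{\varepsilon}_{t}(x))$. A formal chain-rule computation (justified for classical solutions and lifted to the viscosity level through the $\Phi^{(3)}$-invariance hypothesis) shows that the transport term cancels and $v^{\varepsilon}$ solves
\begin{equation*}
\partial_{t}v^{\varepsilon}=F^{\phi^{\varepsilon}_{t}}\bigl(t,x,Dv^{\varepsilon},D^{2}v^{\varepsilon}\bigr),\qquad v^{\varepsilon}(0,\cdot)=u_{0}^{\varepsilon}\circ\phi^{\varepsilon}_{0}=u_{0}^{\varepsilon},
\end{equation*}
where $F^{\psi}$ is the transform of $F$ under a $C^{3}$-diffeomorphism $\psi$ (this is precisely the content encoded by $\Phi^{(3)}$-invariant comparison). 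The point of this rewriting is that $v^{\varepsilon}$ no longer carries any $z^{\varepsilon}$-dependence in its differential part; all of the driving signal is absorbed into the (now smooth in $t$ and in $x$) coefficients $F^{\phi^{\varepsilon}_{t}}$.

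The next step is to pass to the limit $\varepsilon\to 0$. Since $V\in\mathrm{Lip}^{\gamma+2}$ with $\gamma>p$, Lyons' universal limit theorem (or rather its flow version, e.g.\ in \cite{MR2314753}) gives that $\phi^{\varepsilon}_{\cdot}$ converges to a limiting flow $\phi^{\mathbf{z}}_{\cdot}$ of $\mathrm{Lip}^{\gamma+1}$-diffeomorphisms, locally uniformly in $(t,x)$ together with its first three spatial derivatives; moreover this limit depends only on $\mathbf{z}$, not on the chosen approximating sequence, and it is continuous with respect to $p$-variation rough path distance. Consequently $F^{\phi^{\varepsilon}_{t}}\to F^{\phi^{\mathbf{z}}_{t}}$ locally uniformly in all arguments. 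Combined with the standing uniform bound on $u^{\varepsilon}$ (hence on $v^{\varepsilon}$), the Barles--Perthame method of semi-relaxed limits applied to $\partial_{t}-F^{\phi^{\varepsilon}_{t}}$ produces upper and lower semi-relaxed limits $\overline{v}$, $\underline{v}$; by the assumed $\Phi^{(3)}$-invariant comparison (applied to the limiting equation with coefficients $F^{\phi^{\mathbf{z}}_{\cdot}}$) one obtains $\overline{v}\le\underline{v}$ and hence locally uniform convergence $v^{\varepsilon}\to v$, where $v\in\mathrm{BC}([0,T)\times\mathbb{R}^{n})$ is the unique bounded viscosity solution of $\partial_{t}v=F^{\phi^{\mathbf{z}}_{t}}(t,x,Dv,D^{2}v)$, $v(0,\cdot)=u_{0}$. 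Undoing the change of variables by setting $u(t,x):=v(t,(\phi^{\mathbf{z}}_{t})^{-1}(x))$ gives a function which depends only on $\mathbf{z}$ and $u_{0}$, and which arises as the locally uniform limit of $u^{\varepsilon}=v^{\varepsilon}\circ(\phi^{\varepsilon}_{t})^{-1}$. This proves (i), and we take it as the definition of $u^{\mathbf{z}}$.

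Part (ii) follows by applying the comparison principle already at the deterministic level: $v^{\mathbf{z}}$ and $\hat{v}^{\mathbf{z}}$ solve the same equation with different initial data $u_{0},\hat{u}_{0}$, so $\Phi^{(3)}$-invariant comparison yields $|v^{\mathbf{z}}-\hat{v}^{\mathbf{z}}|_{\infty}\le|u_{0}-\hat{u}_{0}|_{\infty}$, and since composition with the diffeomorphism $(\phi^{\mathbf{z}}_{t})^{-1}$ preserves the sup norm, the claimed bound follows. For part (iii), one combines two continuous maps: the rough-path-to-flow map $\mathbf{z}\mapsto\phi^{\mathbf{z}}_{\cdot}$ (continuous from $C^{p\text{-var}}([0,T],G^{[p]}(\mathbb{R}^{d}))$ into $\mathrm{Lip}^{\gamma+1}$-flows, with joint continuity in $p$-variation) and the coefficient-to-viscosity-solution map $(F^{\phi},u_{0})\mapsto v$ (continuous by the same semi-relaxed-limits argument applied to an arbitrary sequence $(\mathbf{z}^{n},u_{0}^{n})\to(\mathbf{z},u_{0})$ in the product topology, using the uniform bound preserved by comparison with linear barriers $\pm C(t+1)$). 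Composing and post-composing with $(\phi^{\mathbf{z}}_{t})^{-1}$ then yields locally uniform convergence of $u^{\mathbf{z}^{n}}\to u^{\mathbf{z}}$.

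The main obstacle is the first step: justifying the change of variables at the viscosity level. Because $u^{\varepsilon}$ is only lower/upper semicontinuous a priori, one cannot differentiate through the chain rule; instead one must verify directly that $v^{\varepsilon}:=u^{\varepsilon}\circ\phi^{\varepsilon}_{t}$ is a viscosity sub/supersolution of the transformed equation. This is exactly what the hypothesis of $\Phi^{(3)}$-invariant comparison is designed to encode (touching test functions $\varphi$ from below at $(\bar t,\bar x)$ for $v^{\varepsilon}$ correspond to touching $\varphi\circ(\phi^{\varepsilon}_{t})^{-1}$ from below at $(\bar t,\phi^{\varepsilon}_{\bar t}(\bar x))$ for $u^{\varepsilon}$, and the first three spatial derivatives of $\phi^{\varepsilon}_{t}$ enter the transformed equation, which is why $V$ must be $\mathrm{Lip}^{\gamma+2}$). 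Handling this carefully — and then verifying that the transformed nonlinearity $F^{\phi}$ inherits degenerate ellipticity and the comparison principle uniformly in the diffeomorphism parameter — is where all the work lies; the convergence arguments afterwards are standard combinations of rough path continuity and Barles--Perthame stability.
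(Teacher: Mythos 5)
Your proposal follows the same route as the paper: transform $u^{\varepsilon}$ into $v^{\varepsilon}:=u^{\varepsilon}(\cdot,\phi^{\varepsilon}_{\cdot}(\cdot))$ using the flow of $dy=V(y)dz^{\varepsilon}$, pass to the limit on the transformed ``random'' PDE via Barles--Perthame semi-relaxed limits, invoke $\Phi^{(3)}$-invariant comparison to identify the relaxed limits, and undo the change of variables with $\phi^{\mathbf{z}}$. The structure and key lemmas you identify (flow version of the universal limit theorem, degenerate-ellipticity preservation, semi-relaxed limits) match the paper's proof exactly. Two points deserve more care, however.

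First, there is a genuine gap in the passage from the semi-relaxed limits to the conclusion $\overline{v}\leq\underline{v}$. The comparison principle gives $\overline{v}\leq\underline{v}$ on $[0,T)\times\mathbb{R}^{n}$ \emph{only if} you first establish $\overline{v}(0,\cdot)\leq\underline{v}(0,\cdot)$. But the definitions of the semi-relaxed limits only yield the trivial ordering $\overline{v}(0,\cdot)\geq u_{0}\geq\underline{v}(0,\cdot)$, which is the wrong direction. Showing that in fact $\overline{v}(0,\cdot)=\underline{v}(0,\cdot)=u_{0}$ is a non-trivial barrier/test-function argument (the paper devotes a separate appendix proposition to it), and without it the chain of reasoning breaks down. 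You should either prove it or explicitly cite a version of this ``consistency of initial data under semi-relaxed limits'' result. Relatedly, once $\overline{v}=\underline{v}$ pointwise, upgrading to locally uniform convergence of $v^{\varepsilon}$ still needs a Dini-type argument, which you do not mention.

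Second, a conceptual imprecision: you attribute the validity of the viscosity-level change of variables (i.e.\ that $u^{\varepsilon}$ is a viscosity solution of \eqref{espPDE} iff $v^{\varepsilon}$ is a viscosity solution of $\partial_{t}=F^{\phi^{\varepsilon}}$) to the hypothesis of $\Phi^{(3)}$-invariant comparison. These are separate matters. The transfer of sub- and super-solutions under a $C^{2}$-diffeomorphism is an unconditional fact about viscosity solutions — it is precisely the correspondence of touching test functions you describe, and holds with no assumption on $F$ beyond continuity. What the $\Phi^{(3)}$-invariant comparison hypothesis supplies, and all that it supplies, is that the transformed equations $\partial_{t}-F^{\phi}=0$ enjoy comparison for \emph{every} $\phi\in\Phi^{(3)}$ — which is exactly what you need in the semi-relaxed-limits step and in part (ii). Keeping these two roles distinct clarifies why the assumption is stated the way it is and why the regularity $V\in\mathrm{Lip}^{\gamma+2}$, $\gamma>p$, is needed (to guarantee the limiting flow $\phi^{\mathbf{z}}$ is a $C^{3}$-flow, hence in $\Phi^{(3)}$).
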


Our proof actually allows for $\mathrm{BC}$ initial data in the above
theorem, since existence of solutions to the approximate problems (\ref%
{espPDE}) is assumed. Our preference for $\mathrm{BUC}$ initial data comes
from the fact that existence results are typically established under some
assumption of uniform continuity (e.g. \cite[Thm 3.1]{crandall-primer}).
Conversely, under a mild sharpening of the structural assumption which is
still satisfied in all our examples the solutions constructed in the above
theorem can be seen to be $\mathrm{BUC}$ ("bounded uniformly continuous") in
time-space. When $F=F\left( Du,D^{2}u\right) ,$ as in the setting of \cite%
{MR1647162, MR1807189}, a spatial modulus is easy to obtain; in the above
generality the comparison proof (based on doubling of the spatial variable)
can be adapted to obtain a spatial modulus of continuity, uniform in time
(this is implemented in \cite{MR1119185} for instance). Curiously, a modulus
in time cannot be established so directly; it is know however that a
"modulus of continuity in space" implies "modulus of continuity in time"
(cf. lemma 9.1. in \cite{MR1904498}). We shall return to such regularity
questions in detail in a separate note.

The reader may wonder if $u\in \mathrm{BC}\left( [0,T)\times \mathbb{R}%
^{n}\right) $ constructed in the above theorem solve a well-defined "rough"
PDE, apart from the formal equation (\ref{RPDE}). The answer is, in essence,
that $u$ is also a solution in the sense of Lions--Souganidis \cite%
{MR1647162, MR1659958, MR1807189} provided their definition is translated,
mutatis mutandis, to the present rough PDE setting. While we suspect that
such a point of view will be the key to a (rough) pathwise understanding of
fully non-linear stochastic partial differential equations, the present
situation ($H$ linear in $Du$) allows for a simpler understanding, still in
the spirit of Lions--Souganidis (to be specific, see \cite[Thm 2.4]%
{MR1647162}). The details of this are best given after the proof of theorem %
\ref{main}; we thus postpone further discussion on this to section \ref%
{AppSPDE}.

\begin{acknowledgement}
M. Caruana was supported by EPSRC\ grant EP/E048609/1. P.K. Friz has
received funding from the European Research Council under the European
Union's Seventh Framework Programme (FP7/2007-2013) / ERC grant agreement
nr. 258237, H. Oberhauser was supported by a DOC-fellowship of the Austrian
Academy of Sciences. Part of this work was undertaken while the last two
authors visited the Radon Institute (Austria). The authors would like to
thank Guy Barles for a helpful email exchange.
\end{acknowledgement}

\section{Condition for comparison}

We shall always assume that $F=F\left( t,x,p,X\right) $ is continuous and
degenerate elliptic. A sufficient condition\footnote{%
... which actually implies degenerate ellipticity, cf. page 18 in \cite[%
(3.14)]{MR1118699UserGuide}.} for comparison of (bounded) solutions to $%
\partial _{t}=F$ on $[0,T)\times \mathbb{R}^{n}$ is given by

\begin{condition}[{\protect\cite[(3.14)]{MR1118699UserGuide}}]
\label{UG314}$\ $There exists a function $\theta :\left[ 0,\infty \right]
\rightarrow \left[ 0,\infty \right] $ with $\theta \left( 0+\right) =0$,
such that for each fixed $t\in \left[ 0,T\right] $,%
\begin{equation}
F\left( t,x,\alpha \left( x-\tilde{x}\right) ,X\right) -F\left( t,\tilde{x}%
,\alpha \left( x-\tilde{x}\right) ,Y\right) \leq \theta \left( \alpha
\left\vert x-\tilde{x}\right\vert ^{2}+\left\vert x-\tilde{x}\right\vert
\right)  \label{UG314_1}
\end{equation}%
for all $\alpha >0$, $x,\tilde{x}\in \mathbb{R}^{n}$ and $X,Y\in S^{n}$ (the
space of $n\times n$ symmetric matrices) satisfy%
\begin{equation}
-3\alpha 
\begin{pmatrix}
I & 0 \\ 
0 & I%
\end{pmatrix}%
\leq 
\begin{pmatrix}
X & 0 \\ 
0 & -Y%
\end{pmatrix}%
\leq 3\alpha 
\begin{pmatrix}
I & -I \\ 
-I & I%
\end{pmatrix}%
\text{.}  \label{MatrixInequality}
\end{equation}%
Furthermore, we require $F=F\left( t,x,p,X\right) $ to be uniformly
continuous whenever $p,X$ remain bounded.
\end{condition}

Although this seems part of the folklore in viscosity theory\footnote{%
E.g. in Section 4.4. of Barles' 1997 lecture notes, www.phys.univ-tours.fr/%
\symbol{126}barles/Toulcours.pdf, or section V.9 in \cite{MR2179357FS}.}
only the case when $\mathbb{R}^{n}$ is replaced by a bounded domain is
discussed in the standard references (\cite[(3.14) and Section 8]%
{MR1118699UserGuide} or \cite[Section V.7, V.8]{MR2179357FS}; in this case
the very last requirement on uniform continuity can be omitted). For this
reason and the reader's convenience we have included a full proof of
parabolic comparison on $[0,T)\times \mathbb{R}^{n}$ under the above
condition in the appendix.

\begin{remark}[Stability under sup, inf etc]
\label{remark on sups}Using elementary inequalities of the type 
\begin{equation*}
\left\vert \sup \left( a\right) -\sup \left( b\right) \right\vert \leq \sup
\left\vert a-b\right\vert \text{ \ for }\,\,a,b\in \mathbb{R},
\end{equation*}%
one immediately sees that if $F_{\gamma },F_{\gamma ,\beta }$ satisfy (\ref%
{UG314_1}) for $\gamma ,\beta $ in some index set with a common modulus $%
\theta $, then $\inf_{\gamma }F_{\gamma },\sup_{\beta }\inf_{\gamma
}F_{\beta ,\gamma }$ etc again satisfy (\ref{UG314_1}). Similar remarks
apply to the uniform continuity property; provided there exists, for any $%
R<\infty $, a common modulus of continuity $\sigma _{R}$, valid whenever $%
p,X $ are of norm less than $R$.
\end{remark}

\section{Invariant comparison}

To motivate our key assumption on $F$ we need some preliminary remarks on
the transformation behaviour of 
\begin{equation*}
Du=\left( \partial _{1}u,\dots ,,\partial _{n}u\right) ,\,D^{2}u=\left(
\partial _{ij}u\right) _{i,j=1,\dots ,n}
\end{equation*}%
under change of coordinates on $\mathbb{R}^{n}$ where $u=u\left( t,\cdot
\right) $, for fixed $t$. Let us allow the change of coordinates to depend
on $t$, say $v\left( t,\cdot \right) :=u\left( t,\phi _{t}\left( \cdot
\right) \right) $ where $\phi _{t}:\mathbb{R}^{n}\rightarrow \mathbb{R}^{n}$
is a diffeomorphism. Differentiating $v\left( t,\phi _{t}^{-1}\left( \cdot
\right) \right) =u\left( t,\cdot \right) $ twice, followed by evaluation at $%
\phi _{t}\left( y\right) $, we have, with summation over repeated indices, 
\begin{eqnarray*}
\partial _{i}u\left( t,\phi _{t}\left( x\right) \right) &=&\partial
_{k}v\left( t,x\right) \partial _{i}\phi _{t}^{-1;k}|_{\phi _{t}\left(
x\right) } \\
\partial _{ij}u\left( t,\phi _{t}\left( x\right) \right) &=&\partial
_{kl}v\left( t,x\right) \partial _{i}\phi _{t}^{-1;k}|_{\phi _{t}\left(
x\right) }\partial _{j}\phi _{t}^{-1;l}|_{\phi _{t}\left( x\right)
}+\partial _{k}v\left( t,x\right) \partial _{ij}\phi _{t}^{-1;k}|_{\phi
_{t}\left( x\right) }.
\end{eqnarray*}%
We shall write this, somewhat imprecisely\footnote{%
Strictly speaking, one should view $\left( Du,D^{2}u\right) |_{\cdot }$ as 
\textit{second order} cotangent vector, the pull-back of $\left(
Dv,D^{2}v\right) |_{x}$ under $\phi _{t}^{-1}$.} but convenient, as 
\begin{eqnarray}
Du|_{\phi _{t}\left( x\right) } &=&\left\langle Dv|_{x},D\phi
_{t}^{-1}|_{\phi _{t}\left( x\right) }\right\rangle ,
\label{DuD2u_transformation} \\
\,\,D^{2}u|_{\phi _{t}\left( x\right) } &=&\left\langle D^{2}v|_{x},D\phi
_{t}^{-1}|_{\phi _{t}\left( x\right) }\otimes D\phi _{t}^{-1}|_{\phi
_{t}\left( x\right) }\right\rangle +\left\langle Dv|_{x},D^{2}\phi
_{t}^{-1}|_{\phi _{t}\left( x\right) }\right\rangle .  \notag
\end{eqnarray}

Let us now introduce $\Phi ^{\left( k\right) }$ as the class of all flows of 
$C^{k}$-diffeomorphisms of $\mathbb{R}^{n}$, $\phi =\left( \phi _{t}:t\in %
\left[ 0,T\right] \right) $, such that $\phi _{0}=\mathrm{Id}$ $\forall \phi
\in \Phi ^{\left( k\right) }$ and such that $\phi _{t}$ and $\phi _{t}^{-1}$
have $k$ bounded derivatives, uniformly in $t\in \left[ 0,T\right] $. We say
that $\phi \left( n\right) \rightarrow \phi $ in $\Phi ^{\left( k\right) }$
iff for all multi-indices $\alpha $ with $\left\vert \alpha \right\vert \leq
k$%
\begin{equation*}
\partial _{\alpha }\phi \left( n\right) \rightarrow \partial _{\alpha }\phi
_{t},\text{ }\partial _{\alpha }\phi \left( n\right) ^{-1}\rightarrow
\partial _{\alpha }\phi _{t}^{-1}\text{ locally uniformly in }\left[ 0,T%
\right] \times \mathbb{R}^{n}.\,\,
\end{equation*}

\begin{definition}[$\Phi ^{\left( k\right) }$-\textbf{invariant comparison; }%
$F^{\protect\phi }$]
\label{PhiInvComp}Let $k\geq 2$ and define $F^{\phi }\left( \left(
t,x,p,X\right) \right) $ as%
\begin{equation}
F\left( t,\phi _{t}\left( x\right) ,\left\langle p,D\phi _{t}^{-1}|_{\phi
_{t}\left( x\right) }\right\rangle ,\left\langle X,D\phi _{t}^{-1}|_{\phi
_{t}\left( x\right) }\otimes D\phi _{t}^{-1}|_{\phi _{t}\left( x\right)
}\right\rangle +\left\langle p,D^{2}\phi _{t}^{-1}|_{\phi _{t}\left(
x\right) }\right\rangle \right)  \label{definition of F_phi}
\end{equation}%
We say that $\partial _{t}=F$ satisfies $\Phi ^{\left( k\right) }$-invariant
comparison if, for every $\phi \in $ $\Phi ^{\left( k\right) }$, comparison
holds for bounded solutions of $\partial _{t}-F^{\phi }=0$. More precisely,
if $u$ is a bounded upper semi-continuous sub- and $v$ a bounded lower
semi-continuous super-solution to this equation and $u\left( 0,\cdot \right)
\leq v\left( 0,\cdot \right) $ then $u\leq v$ on $[0,T)\times \mathbb{R}^{n}$%
.
\end{definition}

\section{Examples}

\begin{example}[$F$ linear]
\label{linear}Suppose that $\sigma \left( t,x\right) :\left[ 0,T\right]
\times \mathbb{R}^{n}\rightarrow \mathbb{R}^{n\times n^{\prime }}$ and $%
b\left( t,x\right) :\left[ 0,T\right] \times \mathbb{R}^{n}\rightarrow 
\mathbb{R}^{n}$ are bounded, continuous in $t$ and Lipschitz continuous in $%
x $, uniformly in $t\in \left[ 0,T\right] $. If $F\left( t,x,p,X\right) =%
\mathrm{Tr}\left[ \sigma \left( t,x\right) \sigma \left( t,x\right) ^{T}X%
\right] +b\left( t,x\right) \cdot p$, then $\Phi ^{\left( 3\right) }$%
-invariant comparison holds. Although this is a special case of the
following example, let us point out that $F^{\phi }$ is of the same form as $%
F$ with $\sigma ,b$ replaced by%
\begin{eqnarray*}
\sigma ^{\phi }\left( t,x\right) _{m}^{k} &=&\sigma _{m}^{i}\left( t,\phi
_{t}\left( x\right) \right) \partial _{i}\phi _{t}^{-1;k}|_{\phi _{t}\left(
x\right) },\,\,\,k=1,\dots ,n;m=1,\dots ,n^{\prime } \\
b^{\phi }\left( t,x\right) ^{k} &=&\left[ b^{i}\left( t,\phi _{t}\left(
x\right) \right) \partial _{i}\phi _{t}^{-1;k}|_{\phi _{t}\left( x\right) }%
\right] +\sum_{i,j}\left( \sigma _{m}^{i}\sigma _{m}^{j}\partial _{ij}\phi
_{t}^{-1;k}|_{\phi _{t}\left( y\right) }\right) ,\,\,\,k=1,\dots ,n.
\end{eqnarray*}%
By defining properties of flows of diffeomorphisms, $t\mapsto \partial
_{i}\phi _{t}^{-1;k}|_{\phi _{t}\left( x\right) },\partial _{ij}\phi
_{t}^{-1;k}|_{\phi _{t}\left( y\right) }$ is continuous and the $C^{3}$%
-boundedness assumption inherent in our definition of $\Phi ^{\left(
3\right) }$ ensures that $\sigma ^{\phi },b^{\phi }$ are Lipschitz in $x$,
uniformly in $t\in \left[ 0,T\right] $. It is then easy to see (cf. the
argument of \cite[Lemma 7.1]{MR2179357FS}) that $F^{\phi }$ satisfies
condition \ref{UG314} for every $\phi \in \Phi ^{\left( 3\right) }$. This
implies that $\Phi ^{\left( 3\right) }$-invariant comparison holds for
bounded solutions of $\partial _{t}-F^{\phi }=0$.
\end{example}

\begin{example}[$F$ quasi-linear]
\label{quasilinear example}\bigskip Let 
\begin{equation}
F\left( t,x,p,X\right) =\mathrm{Tr}\left[ \sigma \left( t,x,p\right) \sigma
\left( t,x,p\right) ^{T}X\right] +b\left( t,x,p\right) .  \label{Fquasi}
\end{equation}%
We assume $b=b\left( t,x,p\right) :\left[ 0,T\right] \times \mathbb{R}%
^{n}\times \mathbb{R}^{n}\rightarrow \mathbb{R}$ is continuous, bounded and
Lipschitz continuous in $x$ and $p$, uniformly in $t\in \left[ 0,T\right] $.
We also assume that $\sigma =\sigma \left( t,x,p\right) :\left[ 0,T\right]
\times \mathbb{R}^{n}\times \mathbb{R}^{n}\rightarrow \mathbb{R}^{n\times
n^{\prime }}$ is a continuous, bounded map such that

\begin{itemize}
\item $\sigma \left( t,\cdot ,p\right) $ is Lipschitz continuous, uniformly
in $\left( t,p\right) \in \left[ 0,T\right] \times \mathbb{R}^{n}$;

\item there exists a constant $c_{1}>0$, such that\footnote{%
A condition of this type also appears also in \cite{MR2010963}.} 
\begin{equation}
\forall p,q\in \mathbb{R}^{n}:\left\vert \sigma \left( t,x,p\right) -\sigma
\left( t,x,q\right) \right\vert \leq c_{1}\frac{\left\vert p-q\right\vert }{%
1+\left\vert p\right\vert +\left\vert q\right\vert }  \label{eqSigmaGra}
\end{equation}%
for all $t\in \left[ 0,T\right] $ and $x\in \mathbb{R}^{n}$. \ 
\end{itemize}

We show that $F^{\phi }$ satisfies condition \ref{UG314} for every $\phi \in
\Phi ^{\left( 3\right) };$ this implies that $\Phi ^{\left( 3\right) }$%
-invariant comparison holds for $\partial _{t}=F$ with $F$ given by (\ref%
{Fquasi}). To see this we proceed as follows. For brevity denote 
\begin{eqnarray*}
p &=&\alpha \left( x-\tilde{x}\right) ,J_{\cdot }=D\phi _{t}^{-1}\lvert
_{\phi _{t}\left( \cdot \right) },H_{\cdot }=D^{2}\phi _{t}^{-1}\lvert
_{\phi _{t}\left( \cdot \right) } \\
\sigma _{\cdot } &=&\sigma \left( t,\phi _{t}\left( \cdot \right)
,\left\langle p,J_{\cdot }\right\rangle \right) ,a_{\cdot }=\sigma _{\cdot
}\sigma _{\cdot }^{T},\,b_{\cdot }=b\left( t,\phi _{t}\left( \cdot \right)
,\left\langle p,J_{\cdot }\right\rangle \right)
\end{eqnarray*}%
so that%
\begin{eqnarray*}
F^{\phi }\left( t,x,p,X\right) &=&\mathrm{Tr}\left[ a_{x}\left( \left\langle
X,J_{x}\otimes J_{x}\right\rangle +\left\langle p,H_{x}\right\rangle \right) %
\right] +b_{x} \\
&=&\mathrm{Tr}\left[ J_{x}a_{x}J_{x}^{T}X\right] +b_{x}+\mathrm{Tr}\left[
a_{x}\left\langle p,H_{x}\right\rangle \right] .
\end{eqnarray*}%
Hence 
\begin{equation*}
F^{\phi }\left( t,\tilde{x},p,Y\right) -F^{\phi }\left( t,x,p,X\right) =%
\underset{=:\left( i\right) }{\underbrace{\mathrm{Tr}\left[ J_{\tilde{x}}a_{%
\tilde{x}}J_{\tilde{x}}^{T}Y-J_{x}a_{x}J_{x}^{T}X\right] }}+\underset{%
=:\left( ii\right) }{\underbrace{b_{\tilde{x}}-b_{x}}}+\underset{=:\left(
iii\right) }{\underbrace{\mathrm{Tr}\left[ a_{\tilde{x}}\left\langle p,H_{%
\tilde{x}}\right\rangle -a_{x}\left\langle p,H_{x}\right\rangle \right] }}.
\end{equation*}%
To estimate $\left( i\right) $ note that $J_{x}a_{x}J_{x}^{T}=J_{x}\sigma
_{x}\left( J_{x}\sigma _{x}\right) ^{T}$. The $\mathbb{R}^{2n}\times \mathbb{%
R}^{2n}$ matrix 
\begin{equation*}
\left( 
\begin{array}{cc}
\left( J_{x}\sigma _{x}\right) \left( J_{x}\sigma _{x}\right) ^{T} & 
J_{x}\sigma _{x}\left( J_{\tilde{x}}\sigma _{\tilde{x}}\right) ^{T} \\ 
\left( J_{\tilde{x}}\sigma _{\tilde{x}}\right) \left( J_{x}\sigma
_{x}\right) ^{T} & J_{\tilde{x}}\sigma _{\tilde{x}}\left( J_{\tilde{x}%
}\sigma _{\tilde{x}}\right) ^{T}%
\end{array}%
\right)
\end{equation*}%
is positive semidefinite and thus we can multiply it to both sides of the
inequality%
\begin{equation*}
\left( 
\begin{array}{cc}
X & 0 \\ 
0 & -Y%
\end{array}%
\right) \leq 3\alpha \left( 
\begin{array}{cc}
I & -I \\ 
-I & I%
\end{array}%
\right) .
\end{equation*}%
The resulting inequality is stable under evaluating the trace and so one gets%
\begin{eqnarray*}
\mathrm{Tr}\left[ J_{x}\sigma _{x}\left( J_{x}\sigma _{x}\right) ^{T}\cdot
X-J_{\tilde{x}}\sigma _{\tilde{x}}\left( J_{\tilde{x}}\sigma _{\tilde{x}%
}\right) ^{T}\cdot Y\right] &\leq &3\alpha \mathrm{Tr}\left[ \left(
J_{x}\sigma _{x}\right) \left( J_{x}\sigma _{x}\right) ^{T}-J_{x}\sigma
_{x}\left( J_{\tilde{x}}\sigma _{\tilde{x}}\right) ^{T}\right. \\
&&\left. -J_{\tilde{x}}\sigma _{\tilde{x}}\left( J_{x}\sigma _{x}\right)
^{T}+J_{\tilde{x}}\sigma _{\tilde{x}}\left( J_{\tilde{x}}\sigma _{\tilde{x}%
}\right) ^{T}\right] \\
&=&3\alpha \mathrm{Tr}\left[ \left( J_{x}\sigma _{x}-J_{\tilde{x}}\sigma _{%
\tilde{x}}\right) \left( J_{x}\sigma _{x}-J_{\tilde{x}}\sigma _{\tilde{x}%
}\right) ^{T}\right] \\
&=&3\alpha \left\Vert J_{x}\sigma _{x}-J_{\tilde{x}}\sigma _{\tilde{x}%
}\right\Vert ^{2}
\end{eqnarray*}%
(using that $\mathrm{Tr}\left[ .\cdot .^{T}\right] $ defines an inner
product for matrices and gives rise to the Frobenius matrix norm $\left\Vert
.\right\Vert $). Hence, by the triangle inequality and Lipschitzness of the
Jacobian of the flow (which follows a fortiori from the boundedness of the
second order derivatives of the flow), 
\begin{eqnarray*}
\left\Vert J_{x}\sigma _{x}-J_{\tilde{x}}\sigma _{\tilde{x}}\right\Vert
&\leq &\left\Vert J_{x}\sigma _{x}-J_{x}\sigma _{\tilde{x}}\right\Vert
+\left\Vert J_{x}\sigma _{\tilde{x}}-J_{\tilde{x}}\sigma _{\tilde{x}%
}\right\Vert \\
&\leq &\left\Vert J_{x}\right\Vert \left\Vert \sigma _{x}-\sigma _{\tilde{x}%
}\right\Vert +\left\Vert J_{x}-J_{\tilde{x}}\right\Vert \left\Vert \sigma _{%
\tilde{x}}\right\Vert \\
&\leq &\left\Vert J_{x}\right\Vert \left\Vert \sigma _{x}-\sigma _{\tilde{x}%
}\right\Vert +c_{2}\left( \sigma ,\phi \right) \left\vert x-\tilde{x}%
\right\vert
\end{eqnarray*}%
Since $\sigma \left( t,\cdot ,q\right) $ is Lipschitz continuous (uniformly
in $\left( t,q\right) \in \left[ 0,T\right] \times \mathbb{R}^{n}$) and $%
\phi _{t}\left( \cdot \right) $ is Lipschitz continuous (uniformly in $t\in %
\left[ 0,T\right] $), we can use our assumption (\ref{eqSigmaGra})\ on $%
\sigma $, to see%
\begin{equation}
\left\Vert \sigma _{x}-\sigma _{\tilde{x}}\right\Vert \leq \left( \text{const%
}\right) \times \left\vert x-\tilde{x}\right\vert .
\label{AlsoUsefulForStep3}
\end{equation}%
Indeed, 
\begin{eqnarray*}
\left\Vert \sigma _{x}-\sigma _{\tilde{x}}\right\Vert &=&\left\Vert \sigma
\left( t,\phi _{t}\left( x\right) ,p\cdot J_{x}\right) -\sigma \left( t,\phi
_{t}\left( \tilde{x}\right) ,p\cdot J_{\tilde{x}}\right) \right\Vert \\
&\leq &\left\Vert \sigma \left( t,\phi _{t}\left( x\right) ,p\cdot
J_{x}\right) -\sigma \left( t,\phi _{t}\left( \tilde{x}\right) ,p\cdot
J_{x}\right) \right\Vert \\
&&+\left\Vert \sigma \left( t,\phi _{t}\left( \tilde{x}\right) ,p\cdot
J_{x}\right) -\sigma \left( t,\phi _{t}\left( \tilde{x}\right) ,p\cdot J_{%
\tilde{x}}\right) \right\Vert \\
&\leq &c_{2}\left( \sigma ,\phi \right) \left\vert x-\tilde{x}\right\vert
+c_{1}\frac{\alpha \left\vert x-\tilde{x}\right\vert \left\vert J_{x}-J_{%
\tilde{x}}\right\vert }{1+\alpha \left\vert x-\tilde{x}\right\vert \left(
\left\vert J_{x}\right\vert +\left\vert J_{\tilde{x}}\right\vert \right) };
\end{eqnarray*}%
and, noting that $\phi _{t}\circ \phi _{t}^{-1}=Id$ and $\sup_{\left(
t,x\right) \in \left[ 0,T\right] \times \mathbb{R}^{n}}\left\Vert D\phi
_{t}|_{x}\right\Vert \leq c_{3}$ implies $\left\Vert J_{x}\right\Vert
=\left\Vert D\phi _{t}^{-1}|_{\phi _{t}\left( x\right) }\right\Vert \geq
1/c_{3}$, we have 
\begin{eqnarray*}
c_{1}\frac{\alpha \left\vert x-\tilde{x}\right\vert \left\vert J_{x}-J_{%
\tilde{x}}\right\vert }{1+\alpha \left\vert x-\tilde{x}\right\vert \left(
\left\vert J_{x}\right\vert +\left\vert J_{\tilde{x}}\right\vert \right) }
&\leq &\left\vert x-\tilde{x}\right\vert \cdot \frac{c_{1}\alpha \left\vert
J_{x}-J_{\tilde{x}}\right\vert }{\alpha \left\vert x-\tilde{x}\right\vert
\left( \left\vert J_{x}\right\vert +\left\vert J_{\tilde{x}}\right\vert
\right) } \\
&\leq &\left\vert x-\tilde{x}\right\vert \frac{c_{4}\left( \sigma ,\phi
\right) \left\vert x-\tilde{x}\right\vert }{\left\vert x-\tilde{x}%
\right\vert \left( \left\vert J_{x}\right\vert +\left\vert J_{\tilde{x}%
}\right\vert \right) } \\
&\leq &c_{5}\left( \sigma ,\phi \right) \left\vert x-\tilde{x}\right\vert .
\end{eqnarray*}%
Putting things together we have%
\begin{equation*}
\left\vert \left( i\right) \right\vert \leq c_{6}\left( \sigma ,\phi \right)
\alpha \left\vert x-\tilde{x}\right\vert ^{2}.
\end{equation*}%
As for $\left( ii\right) $, we have that,%
\begin{eqnarray*}
\left\vert b_{x}-b_{\tilde{x}}\right\vert &\leq &\left\vert b\left( t,\phi
_{t}\left( x\right) ,\left\langle p,J_{x}\right\rangle \right) -b\left(
t,\phi _{t}\left( \tilde{x}\right) ,\left\langle p,J_{x}\right\rangle
\right) \right\vert \\
&&+\left\vert b\left( t,\phi _{t}\left( \tilde{x}\right) ,\left\langle
p,J_{x}\right\rangle \right) -b\left( t,\phi _{t}\left( \tilde{x}\right)
,\left\langle p,J_{\tilde{x}}\right\rangle \right) \right\vert \\
&\leq &c_{7}\left( b\right) \left( \left\vert \phi _{t}\left( x\right) -\phi
_{t}\left( \tilde{x}\right) \right\vert +\left\vert p\right\vert \left\vert
J_{\tilde{x}}-J_{x}\right\vert \right)
\end{eqnarray*}%
where $c_{7}\left( b\right) $ is the (uniform in $t\in \left[ 0,T\right] $)
Lipschitz bound for $b\left( t,\cdot ,\cdot \right) $. To get the required
estimate we again use the regularity of the flow. Finally, for $\left(
iii\right) $, 
\begin{eqnarray*}
\left( iii\right) &=&\mathrm{Tr}\left[ a_{\tilde{x}}\left\langle p,H_{\tilde{%
x}}\right\rangle -a_{\tilde{x}}\left\langle p,H_{x}\right\rangle \right] +%
\mathrm{Tr}\left[ a_{\tilde{x}}\left\langle p,H_{x}\right\rangle
-a_{x}\left\langle p,H_{x}\right\rangle \right] \\
&=&\mathrm{Tr}\left[ a_{\tilde{x}}\left\langle p,H_{\tilde{x}%
}-H_{x}\right\rangle \right] +\mathrm{Tr}\left[ (a_{\tilde{x}%
}-a_{x})\left\langle p,H_{x}\right\rangle \right] \text{.}
\end{eqnarray*}%
Using Cauchy-Schwartz (with inner product $\mathrm{Tr}\left[ .\cdot .^{T}%
\right] $) and $p=\alpha \left( x-\tilde{x}\right) $ it is clear that
boundedness of $H$ and $a$ (i.e. $\sup_{x}\left\vert H_{x}\right\vert
<\infty $ uniformly in $t\in \left[ 0,T\right] $ and similarly for $a$) and
Lipschitz continuity (i.e.$\left\vert H_{x}-H_{\tilde{x}}\right\vert \leq
\left( \text{const}\right) \times \left\vert x-\tilde{x}\right\vert $
uniformly in $t\in \left[ 0,T\right] $ and similar for $a$) will suffice to
obtain the (desired) estimate%
\begin{equation*}
\left\vert \left( iii\right) \right\vert \leq c_{8}\times \alpha \left\vert
x-\tilde{x}\right\vert ^{2}.
\end{equation*}%
Only Lipschitz continuity of $a_{x}=\sigma _{x}\sigma _{x}^{T}$ requires a
discsussion. But this follows, thanks to boundedness of $\sup_{x}\left\vert
\sigma _{x}\right\vert $, from showing Lipschitzness of $x\mapsto \sigma
_{x}=\sigma \left( t,\phi _{t}\left( x\right) ,\left\langle
p,J_{x}\right\rangle \right) $ uniformly in $t\in \left[ 0,T\right] $ which
was already seen in (\ref{AlsoUsefulForStep3}). This shows that $F^{\phi }$
satisfies (\ref{UG314_1}), for any $\phi \in \Phi ^{\left( 3\right) }$. To
see that $F^{\phi }$ satisfies condition \ref{UG314} it only remains to see
that $F^{\phi }\left( t,x,p,X\right) $ is uniformly continuous whenever $p,X$
remain bounded. To see this first observe that the flow map $\phi _{t}\left(
x\right) $, as function of $\left( t,x\right) \in \left[ 0,T\right] \times 
\mathbb{R}^{n}$, is uniformly continuous (but not bounded) while the
derivatives of the (inverse) flow, given by $J_{\cdot },H_{\cdot }$ above,
are bounded uniformly continuous maps as functions of $t,x$. One now easily
concludes with the fact the observations that (a) the product of \textrm{BUC}
function is again \textrm{BUC} and (b) the composition of a \textrm{BUC}
function with a \textrm{UC} function is again \textrm{BUC.}
\end{example}

\begin{example}[$F$ of Hamilton-Jacobi-Bellman type]
From the above examples and remark \ref{remark on sups}, we see that $\Phi
^{\left( 3\right) }$-invariant comparison holds when $F$ is given by%
\begin{equation*}
F\left( t,x,p,X\right) =\inf_{\gamma \in \Gamma }\left\{ \mathrm{Tr}\left[
\sigma \left( t,x;\gamma \right) \sigma \left( t,x;\gamma \right) ^{T}X%
\right] +b\left( t,x;\gamma \right) \cdot p\right\} ,
\end{equation*}%
the usual non-linearity in the Hamilton-Jacobi-Bellman equation, and more
generally 
\begin{equation*}
F\left( t,x,p,X\right) =\inf_{\gamma \in \Gamma }\left\{ \mathrm{Tr}\left[
\sigma \left( t,x,p;\gamma \right) \sigma \left( t,x,p;\gamma \right)
^{T}\cdot X\right] +b\left( t,x,p;\tau \right) \right\}
\end{equation*}%
whenever the conditions in examples \ref{linear} and \ref{quasilinear
example} are satisfied uniformly with respect to $\gamma \in \Gamma $. \ 
\end{example}

\begin{example}[$F$ of Isaac type]
Similarly, $\Phi ^{\left( 3\right) }$-invariant comparison holds for%
\begin{equation*}
F\left( t,x,p,X\right) =\sup_{\beta }\inf_{\gamma }\left\{ \mathrm{Tr}\left[
\sigma \left( t,x;\beta ,\gamma \right) \sigma \left( t,x;\beta ,\gamma
\right) ^{T}X\right] +b\left( t,x;\beta ,\gamma \right) \cdot p\right\} ,
\end{equation*}%
(such non-linearities arise in Isaac equation in the theory of differential
games), and more generally%
\begin{equation*}
F\left( t,x,p,X\right) =\sup_{\beta }\inf_{\gamma }\left\{ \mathrm{Tr}\left[
\sigma \left( t,x,p;\beta ,\gamma \right) \sigma \left( t,x,p;\beta ,\gamma
\right) ^{T}\cdot X\right] +b\left( t,x,p;\beta ,\gamma \right) \right\}
\end{equation*}%
whenever the conditions in examples \ref{linear} and \ref{quasilinear
example} are satisfied uniformly with respect to $\beta \in \mathcal{B}$ and 
$\gamma \in \Gamma $, where $\mathcal{B}$ and $\Gamma $ are arbitrary index
sets.
\end{example}

\section{Some lemmas}

\begin{lemma}
\label{ODE flow}\bigskip Let $z:\left[ 0,T\right] \rightarrow \mathbb{R}^{d}$
be smooth and assume that we are given $C^{3}$-bounded vector fields%
\footnote{%
In particular, if the vector fields are $\mathrm{Lip}^{\gamma }$, $\gamma
>p+2$, $p\geq 1$, then they are also $C^{3}$-bounded.} $V=\left( V_{1},\dots
,V_{d}\right) $. Then ODE%
\begin{equation*}
dy_{t}=V\left( y_{t}\right) dz_{t},\,\,\,t\in \left[ 0,T\right]
\end{equation*}%
has a unique solution flow (of $C^{3}$-diffeomorphisms) $\phi =\phi ^{z}\in
\Phi ^{\left( 3\right) }$.
\end{lemma}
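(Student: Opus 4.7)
The plan is to treat this as a classical smooth ODE result. Since $z \in C^{\infty}([0,T],\mathbb{R}^d)$, I would first rewrite $dy_t = V(y_t)\,dz_t$ as the non-autonomous ODE
\[
\dot{y}_t = W(t, y_t) := \sum_{i=1}^d V_i(y_t)\,\dot{z}_t^i.
\]
Because each $V_i$ is $C^3$-bounded and $\dot{z}$ is bounded and continuous on $[0,T]$, the driver $W$ is continuous in $t$ and, uniformly in $t \in [0,T]$, globally Lipschitz in $y$ together with bounded derivatives up to order $3$. Classical Picard--Lindelöf then yields, for every $y_0 \in \mathbb{R}^n$, a unique solution $t \mapsto y_t$ on the whole interval $[0,T]$ (no blow-up, since $W$ is bounded in $y$); setting $\phi_t(y_0) := y_t$ defines a map with $\phi_0 = \mathrm{Id}$.

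Next I would invoke the standard theorem on $C^k$-dependence on initial conditions: since $y \mapsto W(t,y)$ is $C^3$ uniformly in $t$, the map $y_0 \mapsto \phi_t(y_0)$ is $C^3$, and its derivatives satisfy the linearised variational equations obtained by formal differentiation. Writing $J_t = D\phi_t|_{y_0}$, one has
\[
\dot{J}_t = \Bigl(\sum_{i=1}^d DV_i(\phi_t(y_0))\,\dot{z}_t^i\Bigr)\,J_t, \qquad J_0 = I,
\]
and analogous linear ODEs for $D^2\phi_t$ and $D^3\phi_t$ whose coefficients are polynomial expressions in $DV_i,\,D^2V_i,\,D^3V_i$ (all bounded) and the lower-order variations.

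Gronwall's inequality applied to the $J$-equation gives
\[
\|J_t\| \le \exp\bigl(c_1\,|z|_{1\text{-var};[0,T]}\bigr), \qquad c_1 = \max_i \|DV_i\|_{\infty},
\]
a bound uniform in $y_0 \in \mathbb{R}^n$ and in $t \in [0,T]$. Iterating this through the variational equations of order $2$ and $3$ gives analogous uniform bounds on $D^2\phi_t$ and $D^3\phi_t$, using $C^3$-boundedness of $V$ and the smoothness of $z$ to keep all constants finite. The inverse flow $\phi_t^{-1}$ is obtained by solving the same ODE backwards in time from $t$, so the identical argument produces uniform-in-$t$ $C^3$-bounds on $\phi_t^{-1}$; uniqueness of the flow $\phi = \phi^z$ follows from uniqueness of the underlying ODE solutions. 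Hence $\phi^z \in \Phi^{(3)}$.

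The only mild obstacle is verifying that the coefficients of the second- and third-order variational equations really do inherit bounds from the $C^3$-control of $V$ together with finiteness of $|z|_{1\text{-var};[0,T]}$, so that a cascade of Gronwall estimates produces bounds independent of both $y_0$ and $t$; this is a routine induction on the order of differentiation, and I would not grind through it.
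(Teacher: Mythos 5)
Your argument is correct and is precisely the classical ODE route the paper has in mind: the paper's own proof consists of the single line ``Standard, e.g.\ chapter 4 in [Friz--Victoir]''. Your Picard--Lindel\"of existence argument for the non-autonomous ODE with globally Lipschitz right-hand side, followed by the variational equations and a Gronwall cascade to bound $D\phi_t$, $D^2\phi_t$, $D^3\phi_t$ uniformly in $t$ and the initial point, with time-reversal handling the inverse flow, is exactly the content of that reference.
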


\begin{proof}
Standard, e.g. chapter 4 in \cite{friz-victoir-book}.
\end{proof}

\begin{proposition}
\label{Prop solutions of F and F_phi}Let $z,V$ and $\phi $ be as in lemma %
\ref{ODE flow}. Then $u$ is a viscosity sub- (resp. super-) solution 
\begin{equation}
\dot{u}\left( t,x\right) =F\left( t,x,Du,D^{2}u\right) -Du\left( t,x\right)
\cdot V\left( x\right) \dot{z}\left( t\right)  \label{eq for u}
\end{equation}%
if and only if $v\left( t,x\right) :=u\left( t,\phi _{t}\left( x\right)
\right) $ is a viscosity sub- (resp. super-) solution of 
\begin{equation}
\dot{v}\left( t,x\right) =F^{\phi }\left( t,x,Dv,D^{2}v\right)
\label{eq for v}
\end{equation}%
where $F^{\phi }$ was defined in (\ref{definition of F_phi}).
\end{proposition}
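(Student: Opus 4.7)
The plan is a change-of-variables argument at the level of test functions. The transport term $-Du\cdot V(x)\dot z(t)$ in (\ref{eq for u}) is engineered so that, after pulling back by the flow $\phi$, it cancels exactly the time-derivative contribution picked up from the moving frame. Since $\phi\in\Phi^{(3)}$ is (for each $t$) a $C^{3}$-diffeomorphism in space, composition with $\phi_{t}^{\pm 1}$ sets up a bijection on smooth test functions that preserves the touching-from-above/below relation. This lets me reduce the viscosity inequality for $v$ at $(t_{0},x_{0})$ to the viscosity inequality for $u$ at $(t_{0},\phi_{t_{0}}(x_{0}))$, and vice versa, so the ``if and only if'' falls out of a single computation together with symmetry.

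Concretely, I would first fix a smooth test function $\psi $ and a candidate touching point $(t_{0},x_{0})$ for $v$, set $y_{0}:=\phi_{t_{0}}(x_{0})$, and define $\varphi (t,y):=\psi (t,\phi_{t}^{-1}(y))$. The identity $u(t,y)-\varphi (t,y)=v(t,\phi_{t}^{-1}(y))-\psi (t,\phi_{t}^{-1}(y))$ immediately translates local extrema of $v-\psi$ at $(t_{0},x_{0})$ into local extrema of $u-\varphi$ at $(t_{0},y_{0})$. Next I would compute the $2$-jet of $\varphi$ at $(t_{0},y_{0})$ in terms of the $2$-jet of $\psi$ at $(t_{0},x_{0})$: the spatial part is exactly (\ref{DuD2u_transformation}) with $(u,v)$ replaced by $(\varphi,\psi)$, and the temporal part follows from differentiating $\phi_{t}(\phi_{t}^{-1}(y))=y$ together with the flow relation $\partial_{t}\phi_{t}(x)=V(\phi_{t}(x))\dot{z}(t)$, which yields $\partial_{t}\phi_{t}^{-1}(y)=-D\phi_{t}^{-1}|_{y}\cdot V(y)\dot{z}(t)$ and hence
\[
\partial_{t}\varphi (t_{0},y_{0})=\partial_{t}\psi (t_{0},x_{0})-D\varphi (t_{0},y_{0})\cdot V(y_{0})\dot{z}(t_{0}).
\]

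Finally, assuming $u$ is a subsolution, the viscosity inequality at $(t_{0},y_{0})$ with test function $\varphi$ reads $\partial_{t}\varphi\leq F(t_{0},y_{0},D\varphi ,D^{2}\varphi)-D\varphi\cdot V(y_{0})\dot{z}(t_{0})$; the transport terms cancel, and the remaining right-hand side matches $F^{\phi}(t_{0},x_{0},D\psi (t_{0},x_{0}),D^{2}\psi (t_{0},x_{0}))$ by the very definition (\ref{definition of F_phi}) of $F^{\phi}$, giving the subsolution property of $v$. The supersolution case is identical with reversed inequalities, and the converse implication follows by symmetry upon replacing $\phi$ by its inverse flow (again in $\Phi^{(3)}$) and arguing from the relation $u(t,y)=v(t,\phi_{t}^{-1}(y))$. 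The main obstacle is really just clean bookkeeping of the chain-rule identities; no genuine viscosity-theoretic subtlety arises, because the change of coordinates is smooth and the argument reduces to verifying the algebraic cancellation between the transport term and the time-derivative contribution from the moving frame.
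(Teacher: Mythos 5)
Your proposal is correct and matches the paper's own proof in essence: both transfer test functions through the flow $\phi$, observe that touching points are preserved, compute the time derivative of the transported test function (picking up a transport term that cancels the one in the equation for $u$), and invoke the definition of $F^{\phi}$ for the spatial part. The only cosmetic difference is that you compute $\partial_{t}\varphi$ by differentiating the inverse flow, whereas the paper differentiates the identity $\xi(t,x)=\psi(t,\phi_{t}(x))$ directly, which avoids the formula for $\partial_{t}\phi_{t}^{-1}$ but is otherwise the same calculation.
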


\begin{proof}
Set $y=\phi _{t}\left( x\right) $. When $u$ is a classical sub-solution, it
suffices to use the the chain-rule and definition of $F^{\phi }$ to see that%
\begin{eqnarray*}
\dot{v}\left( t,x\right) &=&\dot{u}\left( t,y\right) +Du\left( t,y\right)
\cdot \dot{\phi}_{t}\left( x\right) =\dot{u}\left( t,y\right) +Du\left(
t,y\right) \cdot V\left( y\right) \dot{z}_{t} \\
&\leq &F\left( t,y,Du\left( t,y\right) ,D^{2}u\left( t,y\right) \right)
=F^{\phi }\left( t,x,Dv\left( t,x\right) ,D^{2}v\left( t,x\right) \right) .
\end{eqnarray*}%
The case when $u$ is a viscosity sub-solution of (\ref{eq for u}) is not
much harder: suppose that $\left( \bar{t},\bar{x}\right) $ is a maximum of $%
v-\xi $, where $\xi \in C^{2}\left( \left[ 0,T\right] \times \mathbb{R}%
^{n}\right) $ and define $\psi \in C^{2}\left( \left[ 0,T\right] \times 
\mathbb{R}^{n}\right) $ by $\psi \left( t,y\right) =\xi \left( t,\phi
_{t}^{-1}\left( y\right) \right) $. Set $\bar{y}=\phi _{\bar{t}}\left( \bar{x%
}\right) $ so that%
\begin{equation*}
F\left( \bar{t},\bar{y},D\psi \left( \bar{t},\bar{y}\right) ,D^{2}\psi
\left( \bar{t},\bar{y}\right) \right) =F^{\phi }\left( \bar{t},\bar{x},D\xi
\left( \bar{t},\bar{x}\right) ,D^{2}\xi \left( \bar{t},\bar{x}\right)
\right) .
\end{equation*}%
Obviously, $\left( \bar{t},\bar{y}\right) $ is a maximum of $u-\psi $, and
since $u$ is a viscosity sub-solution of (\ref{eq for u}) we have%
\begin{equation*}
\dot{\psi}\left( \bar{t},\bar{y}\right) +D\psi \left( \bar{t},\bar{y}\right)
V\left( \bar{y}\right) \dot{z}\left( \bar{t}\right) \leq F\left( \bar{t},%
\bar{y},D\psi \left( \bar{t},\bar{y}\right) ,D^{2}\psi \left( \bar{t},\bar{y}%
\right) \right) .
\end{equation*}%
On the other hand, $\xi \left( t,x\right) =\psi \left( t,\phi _{t}\left(
x\right) \right) $ implies $\dot{\xi}\left( \bar{t},\bar{x}\right) =\dot{\psi%
}\left( \bar{t},\bar{y}\right) +D\psi \left( \bar{t},\bar{y}\right) V\left( 
\bar{y}\right) \dot{z}\left( \bar{t}\right) $ and putting things together we
see that%
\begin{equation*}
\dot{\xi}\left( \bar{t},\bar{x}\right) \leq F^{\phi }\left( \bar{t},\bar{x}%
,D\xi \left( \bar{t},\bar{x}\right) ,D^{2}\xi \left( \bar{t},\bar{x}\right)
\right)
\end{equation*}%
which says precisely that $v$ is a viscosity sub-solution of (\ref{eq for v}%
). Replacing maximum by minimum and $\leq $ by $\geq $ in the preceding
argument, we see that if $u$ is a super-solution of (\ref{eq for u}), then $%
v $ is a super-solution of (\ref{eq for v}).\newline
Conversely, the same arguments show that if $v$ is a viscosity sub- (resp.
super-) solution for (\ref{eq for v}), then $u\left( t,y\right) =v\left(
t,\phi ^{-1}\left( y\right) \right) $ is a sub- (resp. super-) solution for (%
\ref{eq for u}).
\end{proof}

\section{Proof of the main result}

\begin{proof}
\textbf{(Theorem \ref{main}.)} Using Lemma \ref{ODE flow}, we see that $\phi
^{\varepsilon }\equiv \phi ^{z^{\varepsilon }}$, the solution flow to $%
dy=V\left( y\right) dz^{\varepsilon }$, is an element of $\Phi \equiv \Phi
^{\left( 3\right) }$. Set $F^{\varepsilon }:=F^{\phi ^{\varepsilon }}$. From
Proposition \ref{Prop solutions of F and F_phi}, we know that$\
u^{\varepsilon }$ is a solution to%
\begin{equation*}
du^{\varepsilon }=F\left( t,y,Du^{\varepsilon },D^{2}u^{\varepsilon }\right)
dt-Du^{\varepsilon }\left( t,y\right) \cdot V\left( y\right) dz^{\varepsilon
}\left( t\right) ,\,\,\,u^{\varepsilon }\left( 0,\cdot \right)
=u_{0}^{\varepsilon }
\end{equation*}%
if and only if $v^{\varepsilon }$ is a solution to $\partial
_{t}-F^{\varepsilon }=0$ with $v^{\varepsilon }\left( 0,\cdot \right)
=u_{0}^{\varepsilon }$. Let $\phi ^{\mathbf{z}}$ denote the solution flow to
the rough differential equation%
\begin{equation*}
dy=V\left( y\right) d\mathbf{z}.
\end{equation*}%
Thanks to $\mathrm{Lip}^{\gamma +2}$-regularity of the vector fields $\phi ^{%
\mathbf{z}}\in \Phi $, and in particular a flow of $C^{3}$-diffeomorphisms.
Set $F^{\mathbf{z}}=F^{\phi ^{\mathbf{z}}}$. The "universal" limit theorem 
\cite{lyons-qian-02} holds, in fact, on the level of flows of
diffeomorphisms (see \cite{lyons-qian-98} and \cite[Chapter 11]%
{friz-victoir-book} for more details) tells us that, since $z^{\varepsilon }$
tends to $\mathbf{z}$ in rough path sense,%
\begin{equation*}
\phi ^{\varepsilon }\rightarrow \phi ^{\mathbf{z}}\text{ in }\Phi
\end{equation*}%
so that, by continuity of $F$ (more precisely: uniform continuity on
compacts), we easily deduce that%
\begin{equation*}
F^{\varepsilon }\rightarrow F^{\mathbf{z}}\text{ locally uniformly.}
\end{equation*}%
From the method of semi-relaxed limits (Lemma 6.1 and Remarks 6.2, 6.3 and
6.4 in \cite{MR1118699UserGuide}, see also \cite{MR2179357FS}) the pointwise
(relaxed) limits%
\begin{eqnarray*}
\bar{v} &:&=\lim \sup \,^{\ast }\,\,v^{\varepsilon }, \\
\underline{v} &:&=\lim \inf \,_{\ast }\text{\thinspace \thinspace }%
v^{\varepsilon },
\end{eqnarray*}%
are USC sub- resp. LSC super-solutions to $\partial _{t}-F^{\mathbf{z}}=0$.
Boundedness of $\bar{v},\,\underline{v}$ is also clear by assumption that $%
\left( u^{\varepsilon }\right) $ is uniformly bounded. Moreover, since $%
v^{\varepsilon }\left( 0,\cdot \right) =u_{0}^{\varepsilon }\rightarrow
u_{0} $ locally uniformly as $\varepsilon \rightarrow 0$ it is not hard to
see that $\bar{v}\left( 0,\cdot \right) =\underline{v}\left( 0,\cdot \right)
=u_{0}$. (For the reader's convenience we have included a proof of this in
the appendix.) By assumption on $\Phi $-invariant comparison, the equation $%
\partial _{t}-F^{\mathbf{z}}=0$ satisfies comparison. It follows that $v:=%
\bar{v}=\underline{v}$ is the unique (and continuous, since $\bar{v},%
\underline{v}$ are upper resp. lower semi-continuous) solution to%
\begin{equation*}
\partial _{t}v=F^{\mathbf{z}}v\,,\,\,v\left( 0,\cdot \right) =u_{0}\left(
\cdot \right)
\end{equation*}%
(and hence that $v$ does not depend on the approximating sequence to $%
\mathbf{z}$). Moreover, using a simple Dini-type argument (e.g. \cite[p.35]%
{MR1118699UserGuide} or \cite[Lemme 4.1]{MR1613876}) one sees that this
limit must be uniform on compacts. The proof of (i) is finished by setting%
\begin{equation*}
u^{\mathbf{z}}\left( t,x\right) :=v\left( t,\left( \phi _{t}^{\mathbf{z}%
}\right) ^{-1}\left( x\right) \right) .
\end{equation*}%
(ii)\ The comparison $\left\vert u^{\mathbf{z}}-\hat{u}^{\mathbf{z}%
}\right\vert _{\infty ;\left[ 0,T\right] \times \mathbb{R}^{n}}\leq
\left\vert u_{0}-\hat{u}_{0}\right\vert _{\infty ;\mathbb{R}^{n}}$ is a
simple consequence of comparison for $v,\hat{v}$ (solutions to $\partial
_{t}v=F^{\mathbf{z}}v$). At last, to see (iii), we argue in the very same
way as in (i), starting with 
\begin{equation*}
F^{\mathbf{z}_{n}}\rightarrow F^{\mathbf{z}}\text{ locally uniformly}
\end{equation*}%
to see that $v^{n}\rightarrow v$ locally uniformly, i.e. uniformly on
compacts.
\end{proof}

\section{Applications to stochastic partial differential equations\label%
{AppSPDE}}

Applications to SPDEs are path-by-path, i.e. by taking $\mathbf{z}$ to be a
typical realization of Brownian motion and L\'{e}vy's area, $\mathbf{B}%
\left( \omega \right) \equiv \left( B,A\right) $, also known as enhanced
Brownian motion or Brownian rough path. The continuity property (iii) of our
theorem \ref{main} allows to identify (\ref{RPDE}) with $\mathbf{z}=\mathbf{B%
}\left( \omega \right) $ as \textit{Stratonovich solution} to the non-linear
SPDE%
\begin{equation*}
du=F\left( t,x,Du,D^{2}u\right) dt-Du\cdot V\left( x\right) \circ
dB,\,\,\,u\left( 0,\cdot \right) =u_{0}.
\end{equation*}%
Indeed, under the stated assumptions the Wong-Zakai approximations, in which
the Brownian $B$ is replaced by its piecewise linear approximation, based on
some mesh $\left\{ 0,\frac{T}{n},\frac{2T}{n}\dots ,T\right\} $, the
approximate solution will converge (locally uniformly on $\left[ 0,T\right]
\times \mathbb{R}^{n}$ and in probability, say) to the solution of%
\begin{equation*}
du=F\left( t,x,Du,D^{2}u\right) dt-Du\cdot V\left( x\right) d\mathbf{B}%
,\,\,\,u\left( 0,\cdot \right) =u_{0},
\end{equation*}%
as constructed in theorem \ref{main}. If one takes this piecewise linear
approximation property as \textit{definition} of a solution in Stratonovich
sense\footnote{%
... commonly done in the context of anticipating stochastic analysis, see 
\cite{Nu06, MR2319719} for instance.}\ this identification is trivially
settled. More interestingly, there is a number of Wong-Zakai approximation
results for SPDEs, ranging from \cite{MR1313027, MR1353194} to \cite%
{MR2052265, MR2268661}. Any solution of ours that is also covered in the
afore-mentioned references is then indeed a Stratonovich SPDE\ solution in
the usual sense\footnote{%
The same logic has been used by T. Lyons in \cite{lyons-98} to identify
rough differential equation driven by $\mathbf{B}$ as Stratonovich SDE
solutions.}. Of course, (Stratonovich) integral interpretations can break
down in degenerate situations. As example, consider non-differentiable
initial data $u_{0}$ and the (one-dimensional) random transport equation $%
du=u_{x}\circ dB$ with explicit "Stratonovich" solution $u_{0}\left(
x+B_{t}\right) $. (A similar situation occurs for the classical transport
equation $\dot{u}=u_{x}$, of course.) At last, we point out that our
solution also constitutes a \textit{stochastic viscosity solution} in the
sense of Lions--Souganidis \cite{MR1647162, MR1659958, MR1807189}: adapted
to the present setting, and recalling the notation used in the proof of
theorem\textbf{\ }\ref{main}, this amounts to call $u$ a (stochastic
viscosity) solution if $v\left( t,x\right) :=u\left( t,\left( \phi _{t}^{%
\mathbf{B}}\right) ^{-1}\left( x\right) \right) $ satisfies the (random)\
PDE $\partial _{t}v=F^{\mathbf{B}}v$ in viscosity sense\footnote{%
The actual definition of Lions--Souganidis is a localized version of this
and allows for noise of the form $H\left( x,Du\right) \circ dB$ with $H$
non-linear in $Du$. When $H$ is linear in $Du$, the standing assumption in
the present paper, the global and local definition are easily seen to be
equivalent.}. Observe that uniqueness of stochastic viscosity solutions then
follows from the classical theory of viscosity solutions of fully non-linear
second-order partial differential equations. (After all, our assumptions
guarantee that $\partial _{t}-F^{\mathbf{B}}$ satisfies comparison).

\begin{remark}[It\^{o} versus Stratonovich]
Note that similar \textbf{SPDEs in It\^{o}-form} need not be, in general,
well-posed. Consider the following (well-known) linear example 
\begin{equation*}
du=u_{x}dB+\lambda u_{xx}dt,\,\,\lambda \geq 0.
\end{equation*}%
A simple computation shows that $v\left( x,t\right) :=u\left(
x-B_{t},t\right) $ solves the (deterministic) PDE $\dot{v}=\left( \lambda
-1/2\right) v_{xx}$. From elementary facts about the heat-equation we
recognize that, for $\lambda <1/2$, this equation, with given initial data $%
v_{0}=u_{0}$, is not well-posed. In the (It\^{o}-)\ SPDE literature,
starting with \cite{MR553909}, this has led to coercivity conditions, also
known as super-parabolicity assumptions, in order to guarantee
well-posedness.
\end{remark}

\begin{remark}[Regularity of $V$]
Applied to the Brownian context (finite $p$-variation for any $p>2$) the
regularity assumption of theorem \ref{main} reads \textrm{Lip}$%
^{4+\varepsilon }$,$\varepsilon >0$. While our arguments do not appear to
leave much room for improvement we insist that working directly with
Stratonovich flows (rather than rough flows) will not bring much gain: the
regularity requirements are essentially the same. It\^{o} flows, on the
other hand, require one degree less in regularity. In turn, there is a
potential loss of well-posedness and the resulting SPDE is not robust as a
function of its driving noise (similar to classical It\^{o} stochastic
differential equations).
\end{remark}

\begin{remark}[Space-time regularity of SPDE\ solutions]
Since $u\left( t,x\right) =v\left( t,\phi _{t}^{\mathbf{B}}\left( x\right)
\right) $ and $\phi _{t}^{\mathbf{B}}$ is a flow of $C^{3}$- diffeomorphisms
the regularity of $u$ is readily reduced to regularity properties of $v$,
classical viscosity solution to $\partial _{t}v=F^{\mathbf{B}}v$. Unless one
make very specific assumptions on $F$ this is a difficult problem in its own
right; see the relevant remarks in \cite{MR1118699UserGuide} for instance.
\end{remark}

Let us now give some applications, typical in the sense that they have been
studied in great detail in the case of classical (Stratonovich) stochastic
differential equations.

\textbf{(Approximations)} \textit{Any} approximation result to $\mathbf{B}$
in rough path topology implies a corresponding (weak or strong) limit
theorem for such SPDEs: it suffices that an approximation to $B$ converges
in rough path topology; as is well known (e.g. \cite[Chapter 13]%
{friz-victoir-book} and the references therein) examples include piecewise
linear -, mollifier - and Karhunen-Loeve approximations, as well as (weak)
Donsker type random walk approximations \cite{BFH08}. The point being made,
we shall not spell out more details here.

\textbf{(Twisted approximations)} The following result implies en passant
that there is no (classical) pathwise theory of SPDEs in presence of spatial
dependence in the Hamilonian terms.

\begin{theorem}
\label{SPDEsussmann}Let $V=\left( V_{1},\dots ,V_{d}\right) $ be a
collection of $C^{\infty }$-bounded vector fields on $\mathbb{R}^{n}$ and $B$
a $d$-dimensional standard Brownian motion. Then, for every $\alpha =\left(
\alpha _{1},\dots ,\alpha _{N}\right) \in \left\{ 1,\dots ,d\right\} ^{N}$, $%
N\geq 2$, there exists (piecewise) smooth approximations $\left(
z^{k}\right) $ to $B$, with each $z^{k}$ only dependent on $\left\{ B\left(
t\right) :t\in D^{k}\right\} $ where $\left( D^{k}\right) $ is a sequence of
dissections of $\left[ 0,T\right] $ with mesh tending to zero, such that
almost surely%
\begin{equation*}
z^{k}\rightarrow B\text{ uniformly on }\left[ 0,T\right] ,
\end{equation*}%
but $u^{k}$, solutions to 
\begin{equation*}
du^{k}=F\left( t,x,Du^{k},D^{2}u^{k}\right) dt-Du^{k}\left( t,x\right) \cdot
V\left( x\right) dz^{k},\,\,\,u^{k}\left( 0,\cdot \right) =u_{0}\in \mathrm{%
BUC}\left( \mathbb{R}^{n}\right) ,
\end{equation*}%
(with assumptions on $F$ as formulated in theorem \ref{main}) converge
almost surely locally uniformly to the solution of the "wrong" differential
equation%
\begin{equation*}
du=\left[ F\left( t,x,Du,D^{2}u\right) -Du\left( t,x\right) \cdot V_{\alpha
}\left( x\right) \right] dt-Du\left( t,x\right) \cdot V\left( x\right) \circ
dB
\end{equation*}%
where $V_{\alpha }$ is the bracket-vector field given by $V_{\alpha }=\left[
V_{\alpha _{1}},\left[ V_{a_{2}},\dots \left[ V_{\alpha _{N-1}},V_{\alpha
_{N}}\right] \right] \right] $.
\end{theorem}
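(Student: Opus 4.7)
The plan is to reduce this statement to a purely deterministic statement about convergence of approximations in rough path topology, then invoke the continuity property (iii) of Theorem \ref{main}. The point is that one has significant freedom to construct piecewise smooth $z^k$ that converge to $B$ uniformly while forcing the canonical rough path lift $\mathbf{z}^k$ (which lives in $G^{[p]}(\mathbb{R}^d)$) to converge in $p$-variation rough path topology to a rough path $\mathbf{B}^\alpha$ that differs from the Brownian rough path $\mathbf{B}(\omega)$ only by a prescribed translation at a chosen level of the tensor algebra.

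Step 1 (the rough path construction, which I view as the main obstacle). For fixed $\alpha=(\alpha_1,\dots,\alpha_N)$, build $z^k$ on each interval $[t^k_j,t^k_{j+1}]$ of the dissection $D^k$ as follows: on the first sub-interval, follow the linear chord between $B(t^k_j)$ and $B(t^k_{j+1})$; on the second, insert a deterministic $N$-fold nested ``hairpin'' or ``wiggle'' in the coordinate directions $\alpha_1,\dots,\alpha_N$, scaled so that it returns to a point where the final chord (depending again only on $B|_{D^k}$) can close the increment. One verifies that $\|z^k-B\|_{\infty;[0,T]}\to 0$ almost surely (the wiggle amplitudes are taken to vanish), but that the $N$-th iterated integral of $z^k$ accumulates a deterministic extra contribution per interval proportional to the basis element $e_{\alpha_1}\otimes\cdots\otimes e_{\alpha_N}$; after antisymmetrization and summation over $k$, this concentrates, in the limit, as a translation of $\mathbf{B}$ in the direction of the Lie bracket $[e_{\alpha_1},[e_{\alpha_2},\dots,[e_{\alpha_{N-1}},e_{\alpha_N}]]]$ in the free nilpotent Lie algebra $\mathfrak{g}^{[p]}(\mathbb{R}^d)$. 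This is the rough path content of the classical Sussmann/McShane phenomenon; for $N=2$ it is precisely the L\'evy area shift, and the higher $N$ case is treated in the literature on rough path approximations (Friz--Victoir, Chapter 13). The outcome one needs is: $\mathbf{z}^k\to T_{V_\alpha}(\mathbf{B}(\omega))$ almost surely in $p$-variation rough path topology for some $p\in(N,N+1)$, where $T_{V_\alpha}$ denotes the translation operator along the one-parameter subgroup generated by $[e_{\alpha_1},[\dots,[e_{\alpha_{N-1}},e_{\alpha_N}]]]$.

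Step 2 (applying Theorem \ref{main}). Once Step 1 is in place, Theorem \ref{main}(iii) immediately yields $u^k\to u^{\mathbf{z}^\infty}$ locally uniformly almost surely, where $\mathbf{z}^\infty=T_{V_\alpha}(\mathbf{B})$. It remains to identify this rough-path solution with the solution of the ``wrong'' SPDE displayed in the statement.

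Step 3 (identifying the twisted limit as a PDE with an extra drift). Here I use the standard rough-path fact that RDEs driven by a translated rough path amount to adding a drift along the corresponding iterated Lie bracket of vector fields: the flow $\phi^{T_{V_\alpha}(\mathbf{B})}$ of $dy=V(y)\,d\mathbf{z}^\infty$ coincides with the flow of $dy=V(y)\circ dB + V_\alpha(y)\,dt$ with $V_\alpha=[V_{\alpha_1},[V_{\alpha_2},\dots,[V_{\alpha_{N-1}},V_{\alpha_N}]]]$. Combining this with (a trivial extension of) Proposition \ref{Prop solutions of F and F_phi} to the case of a flow that also has a drift component, the PDE satisfied by $v(t,x):=u^{\mathbf{z}^\infty}(t,\phi^{\mathbf{z}^\infty}_t(x))$ acquires an extra term $-Du\cdot V_\alpha(x)\,dt$ upon pulling back, giving exactly
\[
 du = \left[F(t,x,Du,D^2u) - Du\cdot V_\alpha(x)\right]dt - Du\cdot V(x)\circ dB,
\]
with $u(0,\cdot)=u_0$. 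This combined with Step 2 delivers the conclusion.

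The genuinely hard part is Step 1: engineering the $z^k$ so that the lift $\mathbf{z}^k$ converges to the \emph{prescribed} translation at level $N$ while using only the data $B|_{D^k}$ and retaining uniform convergence. For $N=2$ this is a direct Sussmann/McShane-type calculation of the L\'evy area contribution of nested piecewise linear hairpins; for general $N$ one needs to iterate the construction $N-1$ times, carefully tracking which levels below $N$ remain asymptotically correct and which accumulate the intended higher-order shift. Steps 2 and 3 are essentially bookkeeping once Step 1 is secured.
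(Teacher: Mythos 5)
Your proposal is correct and follows essentially the same route as the paper's proof: perturb the step-$N$ Lyons lift $S_N(\mathbf{B})$ at top level in the direction $[e_{\alpha_1},[\ldots,[e_{\alpha_{N-1}},e_{\alpha_N}]]]$ to obtain a rough path $\tilde{\mathbf{B}}$, produce piecewise smooth $z^k\to\tilde{\mathbf{B}}$ in rough path topology (the paper simply cites Friz--Oberhauser for this, where you sketch the underlying nested-hairpin construction), invoke Theorem \ref{main}(iii), and use the flow identification between the RDE driven by $\tilde{\mathbf{B}}$ and the RDE driven by $\mathbf{B}$ with an added $V_\alpha\,dt$ drift to read off the limiting PDE. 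The only cosmetic slip is your notation $T_{V_\alpha}(\mathbf{B})$, which conflates the group translation (by a tensor-algebra element built from the $e_i$) with its downstream effect of adding the drift vector field $V_\alpha$; the substance matches the paper's argument.
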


\begin{proof}
The rough path regularity of $\mathbf{B}\left( \omega \right) $ implies that
higher iterated (Stratonvich) integrals are deterministically defined; see 
\cite[First thm.]{lyons-98}. Doing this up to level $N$ yields a (rough
path) $S_{N}\left( \mathbf{B}\right) $ and we perturbe it in the highest
level, linearly in the 
\begin{equation*}
\left[ e_{\alpha _{1}},\left[ e_{a_{2}},\dots \left[ e_{\alpha
_{N-1}},e_{\alpha _{N}}\right] \right] \right] \text{-direction}
\end{equation*}%
of $S_{N}\left( \mathbf{B}\right) $ viewed as element in the step-$N$ free
nilpotent Lie algebra. This yields a (level-$N$) rough path $\mathbf{\tilde{B%
}}$ and we can find approximations $\left( z^{k}\right) $ that converge
almost surely to $\mathbf{\tilde{B}}$ in rough path topology (see \cite%
{friz-oberhauser-2008b}). One identifies standard RDEs driven by $\mathbf{%
\tilde{B}}$ as RDEs-with-drift (driven along the original vector fields by $d%
\mathbf{B}$, and along $V_{\alpha }$ by $dt$). The resulting identification
obviously holds on the level of RDE flows and thus 
\begin{equation*}
u^{z^{k}}\left( t,x\right) =v\left( t,\left( \phi _{t}^{z^{k}}\right)
^{-1}\left( x\right) \right) \rightarrow u^{\mathbf{\tilde{B}}}\left(
t,x\right) =v\left( t,\left( \phi _{t}^{\mathbf{\tilde{B}}}\right)
^{-1}\left( x\right) \right)
\end{equation*}

The flow identification then implies that%
\begin{equation*}
du=F\left( t,x,Du,D^{2}u\right) dt-Du\left( t,x\right) \cdot V\left(
x\right) d\mathbf{\tilde{B}}
\end{equation*}%
is equivalent to the equation with $V\left( x\right) d\mathbf{\tilde{B}}$
replaced by $V\left( x\right) d\mathbf{B+}V_{\alpha }\left( x\right) dt$.
\end{proof}

\begin{remark}
The attentive reader will have noticed that the preceding result also holds
when the Stratonovich differential$\,\circ dB$ is replaced by $dz$ for some $%
z\in C^{1}\left( \left[ 0,T\right] ,\mathbb{R}^{d}\right) $; it can then be
viewed as result on the effective behaviour of a (deterministic) non-linear
parabolic equations with coefficients that exhibit highly oscillatory
behaviour in time.
\end{remark}

\textbf{(Support results)} In conjunction with known support properties of $%
\mathbf{B}$ (e.g. \cite{LeQiZh02} in $p$-variation rough path topology or 
\cite{friz-lyons-stroock-06} for a conditional statement in H\"{o}lder rough
path topology) continuity of the SPDE solution as a function of $\mathbf{B}$
immediately implies Stroock--Varadhan type support descriptions for such
SPDEs. Let us note that, to the best of our knowledge, results of this type
are new for such non-linear SPDEs. In the linear case, approximations and
support of SPDEs have been studied in great detail \cite{MR1140746,
MR1026781, MR1019596, MR1011658, MR1008230}.

\textbf{(Large deviation results)} Another application of our continuity
result\ is the ability to obtain large deviation estimates when $B$ is
replaced by $\varepsilon B$ with $\varepsilon \rightarrow 0$; indeed, given
the known large deiviation behaviour of $\left( \varepsilon B,\varepsilon
^{2}A\right) $ in rough path topology (e.g. \cite{LeQiZh02} in $p$-variation
and\ \cite{friz-victoir-05} in H\"{o}lder rough path topology) it suffices
to recall that large deviation principles are stable under continuous maps.
Again, large deviation estimates for non-linear SPDEs in the small noise
limit appear to be new and may be hard to obtain without rough paths theory.%
\newline
\textbf{(SPDEs with non-Brownian noise)} Yet another benefit of our approach
is the ability to deal with SPDEs with non-Brownian and even
non-semimartingale noise. For instance, one can take $\mathbf{z}$ as (the
rough path lift of) fractional Brownian motion with Hurst parameter $%
1/4<H<1/2\,$, cf. \cite{coutin-qian-02} or \cite{friz-victoir-2007-gauss}, a
regime which is "rougher" than Brownian and notoriously difficult to handle;
or a diffusion with uniformly elliptic generator in divergence form with
measurable coefficients; see \cite{friz-victoir-subelliptic}. Much of the
above (approximations, support, large deviation) results also extend, as is
clear from the respective results in the above-cited literature.

\section{Further remarks\label{FurtherRmks}}

We have discussed a rough paths approach to stochastic partial differential
equation of the form%
\begin{equation*}
du=F\left( t,x,Du,D^{2}u\right) dt+\sum_{i=1}^{d}H_{i}\left( x,Du\right)
\circ dB^{i}
\end{equation*}%
with fully non-linear $F$ and Hamiltonian $H=H\left( x,Du\right) $, linear
in $Du$. When $F$ is (semi-)linear, uniformly elliptic, there are various
results (based on \textit{backward stochastic differential equations}) for
solving SPDE with general $H_{i}=H_{i}\left( u,Du,x\right) $; see \cite%
{MR1258986} for instance. Under a semi-linearity assumption $(F=\Delta
u+f\left( t,x,Du\right) $) and non-linear $H_{i}=H_{i}\left( Du,x\right) $,
subject to restrictive algebraic properties, a pathwise approach was carried
out by Iftimie--Varsan \cite{MR1992898}.

It is worth pointing out that SPDEs of the form%
\begin{equation*}
du=F\left( t,x,Du,D^{2}u\right) dt+\sum_{i=1}^{d}G_{i}\left( u,x\right)
\circ dB^{i},
\end{equation*}%
have also benefited from global (Doss--Sussmann) type transformations; see 
\cite{MR1799099, MR1828772, MR1831830}. Although this suggests that the
general rough path methodology of the present paper is also applicable for
such SPDEs, by considering rough PDEs of the form%
\begin{equation*}
du=F\left( t,x,Du,D^{2}u\right) dt+G\left( u,x\right) d\mathbf{z},
\end{equation*}%
the matter is far from straight-forward: a transformation based on the
(stochastic/rough) flow associsted to $G$, see \cite{MR1799099}, leads to a
transformed PDE which does not fit in the standard viscosity theory and we
shall return to this in future work.

\section{\protect\bigskip Appendix: comparison for parabolic equations}

Recall that $\mathrm{USC}$ (resp. $\mathrm{LSC}$) refers to upper (resp.
lower) semi-continuity. Let $u\in \mathrm{USC}\left( [0,T)\times \mathbb{R}%
^{n}\right) $ be a bounded subsolution to $\partial _{t}-F$; that is, $%
\partial _{t}u-F\left( t,x,Du,D^{2}u\right) \leq 0$ if $u$ is smooth and
with the usual viscosity definition otherwise. Similarly, let $v\in \mathrm{%
LSC}\left( [0,T)\times \mathbb{R}^{n}\right) $ be a bounded supersolution.

\begin{theorem}
Assume condition \ref{UG314}. Then comparison holds. That is,%
\begin{equation*}
u_{0}\leq v_{0}\text{ on }\mathbb{R}^{n}\implies u\leq v\text{ on }%
[0,T)\times \mathbb{R}^{n}.
\end{equation*}%
where $u_{0}=u\left( 0,\cdot \right) \in \mathrm{USC}\left( \mathbb{R}%
^{n}\right) $ and $v_{0}=v\left( 0,\cdot \right) \in \mathrm{LSC}\left( 
\mathbb{R}^{n}\right) $ denote the (bounded) initial data.
\end{theorem}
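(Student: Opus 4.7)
The plan is a doubling-of-variables argument in the style of Crandall--Ishii--Lions, adapted to a parabolic equation on the unbounded domain $[0,T)\times\mathbb{R}^n$. I would argue by contradiction: suppose $M:=\sup_{[0,T)\times\mathbb{R}^n}(u-v)>0$. Since the supremum need not be attained, I introduce three penalizations and consider, for small $\beta,\eta>0$ and large $\alpha>0$, the auxiliary function
\begin{equation*}
\Phi(t,x,y):=u(t,x)-v(t,y)-\tfrac{\alpha}{2}|x-y|^2-\beta(|x|^2+|y|^2)-\frac{\eta}{T-t}.
\end{equation*}
The $\beta$-term compactifies the spatial domain (since $u,v$ are bounded, $\Phi\to-\infty$ as $|x|+|y|\to\infty$) while the $\eta$-term prevents a maximizing sequence from escaping to $t=T$. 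Combined with upper/lower semicontinuity of $u$ and $-v$, this yields a maximizer $(\hat t,\hat x,\hat y)$ with $\hat t<T$, and for $\beta,\eta$ small enough the maximum value still exceeds $M/2>0$.

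Next I would deploy standard penalization estimates: fixing $\beta,\eta$ and sending $\alpha\to\infty$ forces $\alpha|\hat x-\hat y|^2\to 0$ and $|\hat x-\hat y|\to 0$, while $\beta(|\hat x|^2+|\hat y|^2)\le 2\|u\|_\infty+2\|v\|_\infty$ stays bounded; in particular $\beta|\hat x|,\beta|\hat y|=O(\sqrt{\beta})\to 0$ as $\beta\to 0$. Using the positivity of $\max\Phi$ together with $u_0\le v_0$ and upper semicontinuity, a short continuity argument rules out $\hat t=0$ once $\alpha$ is large and $\beta,\eta$ are small.

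With $\hat t\in(0,T)$ secured, I invoke the parabolic Crandall--Ishii lemma (Theorem 8.3 in \cite{MR1118699UserGuide}) for $u-v-\varphi$, where $\varphi$ denotes the full penalization: this produces $a,b\in\mathbb{R}$ with $a-b=\eta/(T-\hat t)^2$ and matrices $X,Y\in S^n$ satisfying (\ref{MatrixInequality}), together with
\begin{equation*}
(a,p_x,X+2\beta I)\in\overline{\mathcal{P}}^{2,+}u(\hat t,\hat x),\qquad (b,p_y,Y-2\beta I)\in\overline{\mathcal{P}}^{2,-}v(\hat t,\hat y),
\end{equation*}
where $p_x=\alpha(\hat x-\hat y)+2\beta\hat x$ and $p_y=\alpha(\hat x-\hat y)-2\beta\hat y$. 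The sub/supersolution inequalities combined with degenerate ellipticity yield
\begin{equation*}
\frac{\eta}{(T-\hat t)^2}\le F(\hat t,\hat x,p_x,X+2\beta I)-F(\hat t,\hat y,p_y,Y-2\beta I).
\end{equation*}
Inserting and subtracting $F(\hat t,\hat x,\alpha(\hat x-\hat y),X)$ and $F(\hat t,\hat y,\alpha(\hat x-\hat y),Y)$ splits the right-hand side into a central piece controlled by condition \ref{UG314} via $\theta\bigl(\alpha|\hat x-\hat y|^2+|\hat x-\hat y|\bigr)$ and two error pieces that vanish by the uniform continuity clause of $F$ on bounded sets, since $p_x,p_y$ and $X\pm 2\beta I,Y\pm 2\beta I$ all remain in a bounded set once $\alpha$ is fixed and $\beta\to 0$. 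Sending first $\alpha\to\infty$, then $\beta\to 0$, produces $0<\eta/(T-\hat t)^2\le 0$, the desired contradiction.

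The main obstacle compared to the bounded-domain proof in \cite{MR1118699UserGuide} is precisely the unboundedness of $\mathbb{R}^n$: the $\beta$-penalization is mandatory to compactify, and the resulting perturbations $\pm 2\beta\hat x,\mp 2\beta\hat y$ in the gradient slot and $\pm 2\beta I$ in the Hessian slot of $F$ do not cancel automatically. They are controlled only by the extra uniform-continuity clause appended to condition \ref{UG314}, which explains why that clause is absent in the standard bounded-domain treatment but indispensable here.
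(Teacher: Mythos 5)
Your strategy is essentially the paper's: doubling of variables, a spatial penalization $\beta(|x|^2+|y|^2)$ to compactify $\mathbb{R}^n$, the parabolic theorem of sums, and an insert--subtract decomposition into a $\theta$-controlled central piece plus two error pieces handled by the extra uniform-continuity clause. (The only cosmetic difference is that the paper absorbs your explicit $\eta/(T-t)$ term into the normalization ``WLOG $\partial_t u - F \leq -c$ and $u(t,\cdot)\to-\infty$ as $t\to T$''; these are equivalent.) However, your concluding sentence states the order of limits backwards, and here the order is not a matter of taste: your own analysis shows the two pieces vanish under \emph{opposite} orders, and only one resolution of that tension is correct.

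Concretely, you write ``Sending first $\alpha\to\infty$, then $\beta\to 0$,'' but the argument only works the other way around: first $\beta\to 0$ with $\alpha$ fixed, then $\alpha\to\infty$. The reason is that Condition \ref{UG314}'s uniform-continuity clause is \emph{local} --- it holds only while $(p,X)$ ranges over a bounded set. For fixed $\alpha$ and $\beta\to 0$, the gradient $\alpha(\hat x-\hat y)$ stays bounded (since $|\hat x-\hat y|\lesssim \alpha^{-1/2}$) and $\|X\|,\|Y\|\lesssim\alpha$ stay bounded, while the perturbations $2\beta\hat x$, $2\beta\hat y$, $2\beta I$ vanish; so the two error pieces vanish, exactly as you say (``once $\alpha$ is fixed and $\beta\to 0$''). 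But if instead you fix $\beta$ and send $\alpha\to\infty$, the only available bounds are $|\alpha(\hat x-\hat y)|\lesssim\sqrt\alpha$ and $\|X\|,\|Y\|\lesssim\alpha$, both of which blow up; the penalization lemma gives you $\alpha|\hat x-\hat y|^2\to 0$ but says nothing about $\alpha|\hat x-\hat y|$ or $\|X\|$ staying bounded. Then the arguments of $F$ in the error pieces escape every bounded set, the uniform-continuity clause no longer applies, and there is no way to make those error pieces small. So the iterated limit must be $\limsup_{\alpha\to\infty}\limsup_{\beta\to 0}$, i.e.\ $\beta$ inner, $\alpha$ outer --- which matches the paper's $\limsup_{\alpha\to\infty}\limsup_{\varepsilon\to 0}$ and is what Lemma \ref{LemmaPenalityOffDiag2} is precisely set up to deliver. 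Once the order is corrected, the rest of your argument goes through and reproduces the paper's proof.
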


\begin{proof}
We follow the argument given in the User's Guide \cite[Section 8]%
{MR1118699UserGuide}. Without loss of generality, we may assume that $%
\partial _{t}u-F\left( t,x,Du,D^{2}u\right) \leq -c<0$ and that $%
\lim_{t\rightarrow T}u\left( t,x\right) =-\infty $ uniformly in $x\in 
\mathbb{R}^{n}$. We aim to contradict the existence of a point $\left(
s,z\right) \in \left( 0,T\right) \times \mathbb{R}^{n}$ such that%
\begin{equation*}
u\left( s,z\right) -v\left( s,z\right) =\delta >0\text{.}
\end{equation*}%
To this end, consider a maximum point $\left( \hat{t},\hat{x},\hat{y}\right)
\in \lbrack 0,T)\times \mathbb{R}^{n}\times \mathbb{R}^{n}$ of%
\begin{equation*}
\phi \left( t,x,y\right) =u\left( t,x\right) -v\left( t,y\right) -\frac{%
\alpha }{2}\left\vert x-y\right\vert ^{2}-\varepsilon \left( \left\vert
x\right\vert ^{2}+\left\vert y\right\vert ^{2}\right) .
\end{equation*}%
We first argue that, for small (resp. large) enough values of $\varepsilon $
and $\alpha $, the optimizing time paramter $\hat{t}\in \lbrack 0,T)$ cannot
be zero. Indeed, assuming $\hat{t}=0$ we can estimate%
\begin{eqnarray*}
\delta -2\varepsilon \left\vert z\right\vert ^{2} &=&\phi \left(
s,z,z\right)  \\
&\leq &\phi \left( 0,\hat{x},\hat{y}\right)  \\
&=&\sup_{x,y}\left[ u_{0}\left( x\right) -v_{0}\left( y\right) -\frac{\alpha 
}{2}\left\vert x-y\right\vert ^{2}-\varepsilon \left( \left\vert
x\right\vert ^{2}+\left\vert y\right\vert ^{2}\right) \right] .
\end{eqnarray*}%
From Lemma 3.1. in the User's Guide (applied to the $\mathrm{USC}\left( 
\mathbb{R}^{n}\right) $ resp. $\mathrm{LSC}\left( \mathbb{R}^{n}\right) $
map given by $u_{0}\left( x\right) -\varepsilon \left\vert x\right\vert ^{2}$
resp. $v_{0}\left( y\right) -\varepsilon \left\vert y\right\vert ^{2}$) it
follows that%
\begin{eqnarray*}
\lim_{\alpha \rightarrow \infty }\phi \left( 0,\hat{x},\hat{y}\right) 
&=&\sup_{x}\left[ u_{0}\left( x\right) -v_{0}\left( x\right) -2\varepsilon
\left\vert x\right\vert ^{2}\right]  \\
&\leq &\left\vert u_{0}-v_{0}\right\vert _{\infty ;\mathbb{R}^{n}}\leq 0%
\text{ by assumption.}
\end{eqnarray*}%
In particular, there exists $\alpha _{0}=\alpha _{0}\left( \delta \right) $
such that $\phi \left( 0,\hat{x},\hat{y}\right) <\delta /3$ for $\alpha \geq
\alpha _{0}$. If we then choose $\varepsilon \leq \varepsilon
_{0}=\varepsilon _{0}\left( \delta ,z\right) $, determined by $2\varepsilon
_{0}\left\vert z\right\vert ^{2}=\delta /3$ for instance, we are left with
the contradiction%
\begin{equation*}
\delta -\delta /3\leq \delta -2\varepsilon \left\vert z\right\vert ^{2}\leq
\phi \left( 0,\hat{x},\hat{y}\right) <\delta /3.
\end{equation*}%
It follows that $\hat{t}\in \left( 0,T\right) $ whenever $\varepsilon \leq
\varepsilon _{0}$ and $\alpha \geq \alpha _{0}$, which we shall assume from
here on. (In fact, we shall send $\varepsilon \rightarrow 0$, and then $%
\alpha \rightarrow \infty $, in what follows.)

Again, the plan is to arrive at a contradiction (so that we have to reject
the existence of a point $\left( s,z\right) \in \left( 0,T\right) \times 
\mathbb{R}^{n}$ at which $u\left( s,z\right) -v\left( s,z\right) >0$)
altogether. To this end, let us rewrite $\phi \left( t,x,y\right) $ as 
\begin{equation*}
\phi \left( t,x,y\right) =u^{\varepsilon }\left( t,x\right) -v^{\varepsilon
}\left( t,y\right) -\frac{\alpha }{2}\left\vert x-y\right\vert ^{2}
\end{equation*}%
where $u^{\varepsilon }\left( t,x\right) =u\left( t,x\right) -\varepsilon
\left\vert x\right\vert ^{2}$ and $v^{\varepsilon }\left( t,y\right)
=v\left( t,y\right) +\varepsilon \left\vert y\right\vert ^{2}$. Since $%
u^{\varepsilon }$ (resp. $v^{\varepsilon }$) are upper (resp. lower)
semi-continuous we can apply the (parabolic) theorem of sums \cite[Thm 8.3]%
{MR1118699UserGuide} at $\left( \hat{t},\hat{x},\hat{y}\right) $ to learn
that there are numbers $a,b$ and $X,Y\in S^{n}$ such that%
\begin{equation}
\left( a,\alpha \left( \hat{x}-\hat{y}\right) ,X\right) \in \mathcal{\bar{P}}%
^{2,+}u^{\varepsilon }\left( \hat{t},\hat{x}\right) ,\,\,\,\left( b,\alpha
\left( \hat{x}-\hat{y}\right) ,Y\right) \in \mathcal{\bar{P}}%
^{2,-}v^{\varepsilon }\left( \hat{t},\hat{y}\right)  \label{SemijetsForUVeps}
\end{equation}%
such that $a-b=0$ and such that one has the estimate (\ref{MatrixInequality}%
). It is easy to see (cf. \cite[Remark 2.7]{MR1118699UserGuide}) that (\ref%
{SemijetsForUVeps}) is equivalent to%
\begin{eqnarray*}
\text{ }\left( a,\alpha \left( \hat{x}-\hat{y}\right) +2\varepsilon \hat{x}%
,X+2\varepsilon I\right) &\in &\mathcal{\bar{P}}^{2,+}u\left( \hat{t},\hat{x}%
\right) , \\
\left( b,\alpha \left( \hat{x}-\hat{y}\right) -2\varepsilon \hat{y}%
,Y-2\varepsilon I\right) &\in &\mathcal{\bar{P}}^{2,-}v\left( \hat{t},\hat{y}%
\right) ;
\end{eqnarray*}%
using that $\partial _{t}u-F\left( t,x,Du,D^{2}u\right) \leq -c$ and $%
\partial _{t}v-F\left( t,x,Dv,D^{2}v\right) \geq 0$ (always understood in
the sense of viscosity sub- resp. super-solutions) we then see that%
\begin{eqnarray*}
a-F\left( \hat{t},\hat{x},\alpha \left( \hat{x}-\hat{y}\right) +2\varepsilon 
\hat{x},X+2\varepsilon I\right) &\leq &-c, \\
b-F\left( \hat{t},\hat{y},\alpha \left( \hat{x}-\hat{y}\right) -2\varepsilon 
\hat{y},Y-2\varepsilon I\right) &\geq &0.
\end{eqnarray*}%
Using $a=b$, this implies%
\begin{equation*}
0\leq c\leq F\left( \hat{t},\hat{x},\alpha \left( \hat{x}-\hat{y}\right)
+2\varepsilon \hat{x},X+2\varepsilon I\right) -F\left( \hat{t},\hat{y}%
,\alpha \left( \hat{x}-\hat{y}\right) -2\varepsilon \hat{y},Y-2\varepsilon
I\right) .
\end{equation*}%
The last step consists in showing that the right-hand-side converges to zero
by first sending $\varepsilon \rightarrow 0$ and then $\alpha \rightarrow
\infty $. (This yields the desired contradiction which ends the proof.) If $%
\varepsilon $ were absent (e.g. set $\varepsilon =0$ throughout) we would
estimate%
\begin{equation*}
F\left( \hat{t},\hat{x},\alpha \left( \hat{x}-\hat{y}\right) ,X\right)
-F\left( \hat{t},\hat{y},\alpha \left( \hat{x}-\hat{y}\right) ,Y\right) \leq
\theta \left( \alpha \left\vert \hat{x}-\hat{y}\right\vert ^{2}+\left\vert 
\hat{x}-\hat{y}\right\vert \right)
\end{equation*}%
and conclude (Lemma 3.1. in the User's Guide) that%
\begin{equation*}
\alpha \left\vert \hat{x}-\hat{y}\right\vert ^{2},\left\vert \hat{x}-\hat{y}%
\right\vert \rightarrow 0\text{ as }\alpha \rightarrow \infty
\end{equation*}%
in conjunction with continuity of $\theta $ at $0+$. The present case, $%
\varepsilon >0$, is essentially reduced to the case $\varepsilon =0$ by
adding/subtracting%
\begin{equation*}
F\left( \hat{t},\hat{x},\alpha \left( \hat{x}-\hat{y}\right) ,X\right)
-F\left( \hat{t},\hat{y},\alpha \left( \hat{x}-\hat{y}\right) ,Y\right) .
\end{equation*}%
It follows that $c\leq \left( i\right) +\left( ii\right) +\left( ii\right) $
where%
\begin{eqnarray*}
\left( i\right) &=&\left\vert F\left( \hat{t},\hat{x},\alpha \left( \hat{x}-%
\hat{y}\right) +2\varepsilon \hat{x},X+2\varepsilon I\right) -F\left( \hat{t}%
,\hat{x},\alpha \left( \hat{x}-\hat{y}\right) ,X\right) \right\vert \\
\left( ii\right) &=&\left\vert F\left( \hat{t},\hat{y},\alpha \left( \hat{x}-%
\hat{y}\right) -2\varepsilon \hat{y},Y-2\varepsilon I\right) -F\left( \hat{t}%
,\hat{y},\alpha \left( \hat{x}-\hat{y}\right) ,Y\right) \right\vert \\
\left( iii\right) &=&\theta \left( \alpha \left\vert \hat{x}-\hat{y}%
\right\vert ^{2}+\left\vert \hat{x}-\hat{y}\right\vert \right) .
\end{eqnarray*}%
From lemma \ref{LemmaPenalityOffDiag2} below, we see that (a) $p=\alpha
\left( \hat{x}-\hat{y}\right) $ remains, for fixed $\alpha $, bounded as $%
\varepsilon \rightarrow 0$, (b) $2\varepsilon \left\vert \hat{x}\right\vert $
and $2\varepsilon \left\vert \hat{y}\right\vert $ tend to zero as $%
\varepsilon \rightarrow 0$, for fixed $\alpha $, and (c) 
\begin{equation*}
\limsup_{\alpha \rightarrow \infty }\limsup_{\varepsilon \rightarrow
0}\left( \alpha \left\vert \hat{x}-\hat{y}\right\vert ^{2}+\left\vert \hat{x}%
-\hat{y}\right\vert \right) =\lim_{\alpha \rightarrow \infty
}\limsup_{\varepsilon \rightarrow 0}\left( \alpha \left\vert \hat{x}-\hat{y}%
\right\vert ^{2}+\left\vert \hat{x}-\hat{y}\right\vert \right) =0.
\end{equation*}%
We also note that (\ref{MatrixInequality}) implies that any matrix norm of $%
X,Y$ is bounded by a constant times $\alpha $, independent of $\varepsilon $%
. Since $F$ is assumed to be uniformly continuous whenever its gradient and
Hessian argument remain in abounded set, combining all this information
shows that 
\begin{equation*}
\limsup_{\varepsilon \rightarrow 0}\left( i\right) ,\,\limsup_{\varepsilon
\rightarrow 0}\left( ii\right) ,\,\limsup_{\varepsilon \rightarrow 0}\left(
iii\right)
\end{equation*}%
all tend to $0$ as $\alpha \rightarrow \infty $. In summary,%
\begin{equation*}
0<c\leq \lim_{\alpha \rightarrow \infty }\limsup_{\varepsilon \rightarrow 0} 
\left[ \left( i\right) +\left( ii\right) +\left( ii\right) \right] =0
\end{equation*}%
which is the desired contradiction. The proof is now finished.
\end{proof}

\begin{lemma}
\label{LemmaPenalityOffDiag2}Let \bigskip $u\in \mathrm{USC}\left(
[0,T)\times \mathbb{R}^{n}\right) $ bounded from above and $v\in \mathrm{LSC}%
\left( [0,T)\times \mathbb{R}^{n}\right) $ bounded from below. Consider a
maximum point $\left( \hat{t},\hat{x},\hat{y}\right) \in (0,T)\times \mathbb{%
R}^{n}\times \mathbb{R}^{n}$ of%
\begin{equation*}
\phi \left( t,x,y\right) =u\left( t,x\right) -v\left( t,y\right) -\frac{%
\alpha }{2}\left\vert x-y\right\vert ^{2}-\varepsilon \left( \left\vert
x\right\vert ^{2}+\left\vert y\right\vert ^{2}\right) .
\end{equation*}%
where $\alpha ,\varepsilon >0$. Then%
\begin{eqnarray}
\lim \sup_{\varepsilon \rightarrow 0}\alpha \left( \hat{x}-\hat{y}\right)
&=&C\left( \alpha \right) <\infty ,  \label{LemmaAppEst1} \\
\limsup_{\alpha \rightarrow \infty }\limsup_{\varepsilon \rightarrow
0}\varepsilon \left( \left\vert \hat{x}\right\vert ^{2}+\left\vert \hat{y}%
\right\vert ^{2}\right) &=&0,  \label{LemmaAppEst2} \\
\limsup_{\alpha \rightarrow \infty }\limsup_{\varepsilon \rightarrow
0}\left( \frac{\alpha }{2}\left\vert \hat{x}-\hat{y}\right\vert
^{2}+\left\vert \hat{x}-\hat{y}\right\vert \right) &=&0.
\label{LemmaAppEst3}
\end{eqnarray}
\end{lemma}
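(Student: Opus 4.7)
My plan is to exploit the maximality of $(\hat{t},\hat{x},\hat{y})$ by comparing $\phi(\hat{t},\hat{x},\hat{y})\geq \phi(t',x',y')$ against three progressively more refined test points, in the spirit of the classical Crandall--Ishii--Lions penalization technique. First I would derive a baseline a priori bound by testing against a fixed $(t_{0},0,0)$; using $u\leq \sup u<\infty$ and $v\geq \inf v>-\infty$ yields
\begin{equation*}
\tfrac{\alpha }{2}|\hat{x}-\hat{y}|^{2}+\varepsilon (|\hat{x}|^{2}+|\hat{y}|^{2})\leq K
\end{equation*}
with $K$ independent of $\alpha$ and $\varepsilon$. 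This gives (\ref{LemmaAppEst1}) directly via $\alpha |\hat{x}-\hat{y}|\leq \sqrt{2K\alpha }$, and also settles the $|\hat{x}-\hat{y}|$ contribution to (\ref{LemmaAppEst3}) via the uniform-in-$\varepsilon$ bound $|\hat{x}-\hat{y}|\leq \sqrt{2K/\alpha }$, which tends to $0$ as $\alpha \to \infty$.

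For (\ref{LemmaAppEst2}) I would introduce $M_{\alpha }:=\sup \tilde{\phi}_{\alpha }$, where $\tilde{\phi}_{\alpha }(t,x,y):=u(t,x)-v(t,y)-\tfrac{\alpha }{2}|x-y|^{2}$; this is finite since $\tilde{\phi}_{\alpha }\leq \sup u-\inf v$. For any $\eta >0$, pick a near-supremum point $(t_{1},x_{1},y_{1})$ of $\tilde{\phi}_{\alpha }$. Comparison yields the lower bound $M_{\alpha ,\varepsilon }:=\phi (\hat{t},\hat{x},\hat{y})\geq \tilde{\phi}_{\alpha }(t_{1},x_{1},y_{1})-\varepsilon (|x_{1}|^{2}+|y_{1}|^{2})\geq M_{\alpha }-\eta $ for $\varepsilon$ small, while the complementary upper bound $M_{\alpha ,\varepsilon }\leq M_{\alpha }-\varepsilon (|\hat{x}|^{2}+|\hat{y}|^{2})$ is automatic from $\tilde{\phi}_{\alpha }(\hat{t},\hat{x},\hat{y})\leq M_{\alpha }$. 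Combining forces $\varepsilon (|\hat{x}|^{2}+|\hat{y}|^{2})\leq \eta$ for $\varepsilon$ small. Since $\eta$ is arbitrary, the inner limsup in (\ref{LemmaAppEst2}) vanishes for every fixed $\alpha$.

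The remaining quadratic piece of (\ref{LemmaAppEst3}) is the main obstacle. Setting $M:=\sup _{t,x}[u(t,x)-v(t,x)]$ and comparing with a near-diagonal test point $(t_{2},x_{2},x_{2})$ with $u(t_{2},x_{2})-v(t_{2},x_{2})\geq M-\eta $ gives
\begin{equation*}
\tfrac{\alpha }{2}|\hat{x}-\hat{y}|^{2}\leq \left[ u(\hat{t},\hat{x})-v(\hat{t},\hat{y})\right] -M+\eta +2\varepsilon |x_{2}|^{2}.
\end{equation*}
The delicate step is controlling the bracketed term from above by $M+o(1)$ in the iterated limit. Step 2 already identifies $(\hat{t}_{\varepsilon },\hat{x}_{\varepsilon },\hat{y}_{\varepsilon })$ as a maximizing sequence for $\tilde{\phi}_{\alpha }$ as $\varepsilon \to 0$, so $u(\hat{t},\hat{x})-v(\hat{t},\hat{y})-\tfrac{\alpha }{2}|\hat{x}-\hat{y}|^{2}\to M_{\alpha }$; combined with the uniform $|\hat{x}-\hat{y}|\leq \sqrt{2K/\alpha }\to 0$ and the upper/lower semi-continuity of $u$ and $v$ --- exploited on the compact sub-level sets afforded by the $\varepsilon $-penalty to extract limit points --- an adaptation of Lemma 3.1 of \cite{MR1118699UserGuide} forces $M_{\alpha }\to M$ as $\alpha \to \infty $ and closes the argument. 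The careful interplay between the non-compactness of $\mathbb{R}^{n}$ and the order of the iterated limit is the principal technical hurdle.
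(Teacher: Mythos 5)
Your arguments for \eqref{LemmaAppEst1}, \eqref{LemmaAppEst2}, and the $|\hat{x}-\hat{y}|$-contribution to \eqref{LemmaAppEst3} are correct, and in fact give a bit more than claimed: the inner $\limsup_{\varepsilon\to 0}$ in \eqref{LemmaAppEst2} vanishes already for every fixed $\alpha$, by the neat squeeze $M_{\alpha}-2\eta\leq M_{\alpha,\varepsilon}\leq M_{\alpha}-\varepsilon(|\hat{x}|^{2}+|\hat{y}|^{2})$. The trouble is entirely in the last step, where you want to bound the quadratic piece via $M_{\alpha}\to M:=\sup_{t,x}\,[u(t,x)-v(t,x)]$.

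That identification is false in general on the unbounded domain $\mathbb{R}^{n}$. The sequence $(\hat{t}_{\varepsilon},\hat{x}_{\varepsilon},\hat{y}_{\varepsilon})$ is indeed maximizing for $\tilde{\phi}_{\alpha}$ as $\varepsilon\to 0$, but as $\varepsilon\to 0$ the coercivity of the $\varepsilon$-penalty disappears, so there are no compact sub-level sets on which to extract a limit point, and the semicontinuity of $u,v$ has nothing to bite on. What one actually gets is $M_{\alpha}\downarrow M':=\lim_{h\downarrow 0}\,\sup_{t,\,|x-y|\leq h}\,[u(t,x)-v(t,y)]$, and $M'$ can be \emph{strictly} larger than $M$. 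For a concrete counterexample take $T$-independent
\begin{equation*}
u(t,x)=\sum_{n\geq 1}\left(1-\tfrac{1}{n}\right)\mathbf{1}_{\{x=n\}},\qquad v(t,y)=-\sum_{n\geq 1}\left(1-\tfrac{1}{n}\right)\mathbf{1}_{\{y=n+1/n\}},
\end{equation*}
which are bounded, $u\in\mathrm{USC}$, $v\in\mathrm{LSC}$; one checks $M=1$ but $M(h)\geq 2(1-1/n)$ for all $h\geq 1/n$, hence $M'=2$. With $M_{\alpha}\to M'=2$ your inequality only yields $\tfrac{\alpha}{2}|\hat{x}-\hat{y}|^{2}\leq M'-M+o(1)=1+o(1)$, which is useless. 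The cure is exactly the device used in the paper: work with $M(h)$ and $M'$ from the start, pick near-diagonal test points $(t_{h},x_{h},y_{h})$ with $|x_{h}-y_{h}|\leq h$ realizing $M(h)$ up to an error, and let $\varepsilon=\varepsilon(h)\to 0$ fast enough that $\varepsilon(|x_{h}|^{2}+|y_{h}|^{2})\to 0$. Then one squeezes $\limsup\,[u(\hat{t},\hat{x})-v(\hat{t},\hat{y})]\leq M'\leq\liminf M_{\alpha,\varepsilon}$ in the iterated limit and \eqref{LemmaAppEst3} falls out without ever needing $M'=M$.
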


\begin{remark}
A similar lemma is found in \cite{bibKobylanski} or (without $t$ dependence) in Barles' book \cite[%
Lemme 4.3]{MR1613876}; the order in which limits are taken is important and
suggests the notation 
\begin{equation*}
\limsup_{\varepsilon <<\frac{1}{\alpha }\,\rightarrow 0}\,\left( ...\right)
:=\limsup_{\alpha \rightarrow \infty }\limsup_{\varepsilon \rightarrow
0}\,\left( ...\right) ,\,\,\liminf_{\varepsilon <<\frac{1}{\alpha }%
\,\rightarrow 0}\,\left( ...\right) :=\liminf_{\alpha \rightarrow \infty
}\liminf_{\varepsilon \rightarrow 0}\,\left( ...\right) .
\end{equation*}
\end{remark}

\begin{proof}
We start with some notation, where unless otherwise stated $t\in \left[ 0,T%
\right] $ and $x,y\in \mathbb{R}^{n}$,%
\begin{eqnarray*}
M_{\alpha ,\varepsilon } &:&=\sup_{t,x,y}\phi \left( t,x,y\right) =u\left( 
\hat{t},\hat{x}\right) -v\left( \hat{t},\hat{y}\right) -\frac{\alpha }{2}%
\left\vert \hat{x}-\hat{y}\right\vert ^{2}-\varepsilon \left( \left\vert 
\hat{x}\right\vert ^{2}+\left\vert \hat{y}\right\vert ^{2}\right) ; \\
M\left( h\right) &:&=\sup_{t,x,y:\left\vert x-y\right\vert \leq h}\left[
u\left( t,x\right) -v\left( t,y\right) \right] \geq \sup_{t,x}\left[ u\left(
t,x\right) -v\left( t,x\right) \right] \\
M^{\prime } &:&=\,\downarrow \lim_{h\rightarrow 0}M\left( h\right)
\end{eqnarray*}%
(As indicated, $M^{\prime }$ exists as limit of $M\left( h\right) $,
non-increasing in $h$ and bounded from below.)

\textbf{Step 1:} Take $t=x=y=0$ as argument of $\phi \left( t,x,y\right) $.
Since $M_{\alpha ,\varepsilon }=\sup \phi $ we have 
\begin{equation*}
c=u\left( 0,0\right) -v\left( 0,0\right) \leq M_{\alpha ,\varepsilon
}=u\left( \hat{t},\hat{x}\right) -v\left( \hat{t},\hat{y}\right) -\frac{%
\alpha }{2}\left\vert \hat{x}-\hat{y}\right\vert ^{2}-\varepsilon \left(
\left\vert \hat{x}\right\vert ^{2}+\left\vert \hat{y}\right\vert ^{2}\right)
\end{equation*}%
and hence, for a suitable constant $C$ (e.g. $C^{2}:=\sup u+\sup \left(
-v\right) +c$) 
\begin{equation*}
\frac{\alpha }{2}\left\vert \hat{x}-\hat{y}\right\vert ^{2}+\varepsilon
\left( \left\vert \hat{x}\right\vert ^{2}+\left\vert \hat{y}\right\vert
^{2}\right) \leq C^{2}
\end{equation*}%
which implies%
\begin{equation}
\left\vert \hat{x}-\hat{y}\right\vert \leq C\sqrt{2/\alpha }
\label{xhatMinusyhat}
\end{equation}%
and hence $\alpha \left\vert \hat{x}-\hat{y}\right\vert \leq \sqrt{2\alpha }%
C $ which is the first claimed estimate (\ref{LemmaAppEst1}).

\textbf{Step 2:} We first argue that it is enough to show the (two)
estimates 
\begin{equation}
\limsup_{\varepsilon <<\frac{1}{\alpha }\,\rightarrow 0}\left[ \,u\left( 
\hat{t},\hat{x}\right) -v\left( \hat{t},\hat{y}\right) \right] \leq
M^{\prime }\leq \liminf_{\varepsilon <<\frac{1}{\alpha }\,\rightarrow
0}\,M_{\alpha ,\varepsilon }.  \label{LemmaAppLeftToDo}
\end{equation}%
Indeed, from $\frac{\alpha }{2}\left\vert \hat{x}-\hat{y}\right\vert
^{2}+\varepsilon \left( \left\vert \hat{x}\right\vert ^{2}+\left\vert \hat{y}%
\right\vert ^{2}\right) =u\left( \hat{t},\hat{x}\right) -v\left( \hat{t},%
\hat{y}\right) -M_{\alpha ,\varepsilon }$ it readily follows that 
\begin{eqnarray*}
\limsup_{\varepsilon <<\frac{1}{\alpha }\,\rightarrow 0}\frac{\alpha }{2}%
\left\vert \hat{x}-\hat{y}\right\vert ^{2}+\varepsilon \left( \left\vert 
\hat{x}\right\vert ^{2}+\left\vert \hat{y}\right\vert ^{2}\right) &\leq
&\limsup_{\varepsilon <<\frac{1}{\alpha }\,\rightarrow 0}\left[ u\left( \hat{%
t},\hat{x}\right) -v\left( \hat{t},\hat{y}\right) -M_{\alpha ,\varepsilon }%
\right] \\
&=&\limsup_{\varepsilon <<\frac{1}{\alpha }\,\rightarrow 0}\left[ \,u\left( 
\hat{t},\hat{x}\right) -v\left( \hat{t},\hat{y}\right) \right]
-\liminf_{\varepsilon <<\frac{1}{\alpha }\,\rightarrow 0}\,M_{\alpha
,\varepsilon } \\
&\leq &0\text{ (and hence }=0\text{).}
\end{eqnarray*}%
This already gives (\ref{LemmaAppEst2}) and also (\ref{LemmaAppEst3}),
noting that%
\begin{equation*}
\left\vert \hat{x}-\hat{y}\right\vert =\alpha ^{-1/2}\alpha ^{1/2}\left\vert 
\hat{x}-\hat{y}\right\vert \leq \frac{1}{2\alpha }+\frac{\alpha }{2}%
\left\vert \hat{x}-\hat{y}\right\vert ^{2}.
\end{equation*}%
We are left to show (\ref{LemmaAppLeftToDo}). For the first estimate, it
suffices to note that, from (\ref{xhatMinusyhat}) and the definition of $%
M\left( h\right) $ applied with $h=C\sqrt{2/\alpha }$,%
\begin{eqnarray*}
\limsup_{\varepsilon <<\frac{1}{\alpha }\,\rightarrow 0}\left[ \,u\left( 
\hat{t},\hat{x}\right) -v\left( \hat{t},\hat{y}\right) \right] &\leq
&\limsup_{\varepsilon <<\frac{1}{\alpha }\,\rightarrow 0}M\left( \sqrt{\frac{%
2}{\alpha }}C\right) \\
&=&\lim_{\alpha \rightarrow \infty }M\left( \sqrt{\frac{2}{\alpha }}C\right)
=M^{\prime }.
\end{eqnarray*}%
We now turn to the second estimate in (\ref{LemmaAppLeftToDo}). From the
very definition of $M^{\prime }$ as $\lim_{h\rightarrow 0}M\left( h\right) $%
, there exists a family $\left( t_{h},x_{h},y_{h}\right) $ so that 
\begin{equation}
|x_{h}-y_{h}|\,\leq h\text{ and }u(t_{h},x_{h})-v(t_{h},x_{h})\rightarrow
M^{\prime }\text{ as }h\rightarrow 0  \label{eqBoundOnDistance}
\end{equation}%
For every $\alpha ,\varepsilon $ we may take $\left(
t_{h},x_{h},y_{h}\right) $ as argument of $\phi $; since $M_{\alpha
,\varepsilon }=\sup \phi $ we have 
\begin{equation}
u(t_{h},x_{h})-v(t_{h},y_{h})-\frac{\alpha }{2}h^{2}-\varepsilon
(|x_{h}|^{2}+|y_{h}|^{2})\leq M_{\alpha ,\varepsilon }  \label{eqSubseq}
\end{equation}%
Take now $\varepsilon =\varepsilon \left( h\right) \rightarrow 0$ with $%
h\rightarrow 0$; fast enough so that $\varepsilon
(|x_{h}|^{2}+|y_{h}|^{2})\rightarrow 0$; for instance $\varepsilon \left(
h\right) :=$ $h/\left( 1+(|x_{h}|^{2}+|y_{h}|^{2})\right) $ would do. It
follows that%
\begin{eqnarray*}
M^{\prime } &=&\lim_{h\rightarrow 0}u(t_{h},x_{h})-v(t_{h},y_{h}) \\
&=&\liminf_{h\,\rightarrow 0}u(t_{h},x_{h})-v(t_{h},y_{h})-\frac{\alpha }{2}%
h^{2}-\varepsilon (|x_{h}|^{2}+|y_{h}|^{2}) \\
&\leq &\liminf_{h\,\rightarrow 0}M_{\alpha ,\varepsilon
_{h}}=\liminf_{\varepsilon \,\rightarrow 0}M_{\alpha ,\varepsilon }\text{ by
monotonicity of }M_{\alpha ,\varepsilon }\text{ in }\varepsilon .
\end{eqnarray*}%
Since this is valid for every $\alpha $, we also have%
\begin{equation*}
M^{\prime }\leq \liminf_{\alpha \,\rightarrow \infty }\liminf_{\varepsilon
\,\rightarrow 0}M_{\alpha ,\varepsilon }.
\end{equation*}%
This is precisely the second estimate in (\ref{LemmaAppLeftToDo}) and so the
proof is finished.
\end{proof}

\section{\protect\bigskip Appendix: initial data under semi-relaxed limits}

Let $\left( v^{\varepsilon }:\varepsilon >0\right) $ denote a family of
uniformly bounded viscosity solutions $v^{\varepsilon }\left( t,x\right) $ to%
\begin{equation*}
\partial _{t}v^{\varepsilon }-F^{\varepsilon }\left( t,x,Dv^{\varepsilon
},D^{2}v^{\varepsilon }\right) =0,\,\,\,\,v^{\varepsilon }\left( 0,\cdot
\right) =g^{\varepsilon }\in \mathrm{BC}\left( \left[ 0,T\right] \times 
\mathbb{R}^{n}\right)
\end{equation*}%
where $F^{\varepsilon }=F^{\varepsilon }\left( t,x,p,X\right) $ is a
continuous function of its arguments. Assume $g^{\varepsilon }\rightarrow
g\in \mathrm{BC}$ locally uniformly and $F^{\varepsilon }\rightarrow F$
locally uniformly and recall that the semi-relaxed limits are defined by%
\begin{eqnarray*}
\bar{v}\left( t,x\right) &:&=\limsup_{\left( s,y\right) \in \left[ 0,T\right]
\times \mathbb{R}^{n}:s\rightarrow t,y\rightarrow x,\varepsilon \rightarrow
0}v^{\varepsilon }\left( s,y\right) , \\
\underline{v}\left( t,x\right) &:&=\liminf_{\left( s,y\right) \in \left[ 0,T%
\right] \times \mathbb{R}^{n}:s\rightarrow t,y\rightarrow x,\varepsilon
\rightarrow 0}v^{\varepsilon }\left( s,y\right) .
\end{eqnarray*}

\begin{proposition}
We have $\bar{v}\left( 0,x\right) =\underline{v}\left( 0,x\right) =g\left(
x\right) $.
\end{proposition}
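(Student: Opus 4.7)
The inequalities $\underline v(0,x) \leq g(x) \leq \bar v(0,x)$ are immediate: taking the constant sequence $(s_n, y_n) = (0, x)$ in the definition of the semi-relaxed limits and using $v^{\varepsilon_n}(0,x) = g^{\varepsilon_n}(x) \to g(x)$ gives both inequalities at once. Only the reverse inequalities require work; by symmetry it suffices to establish $\bar v(0, x_0) \leq g(x_0)$ for each fixed $x_0 \in \mathbb{R}^n$.

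Fix $x_0$ and $\eta > 0$. The strategy is the classical barrier/comparison argument. I would construct a smooth classical supersolution $\psi$ of $\partial_t - F^\varepsilon = 0$, valid uniformly in all sufficiently small $\varepsilon$, that dominates $v^\varepsilon(0, \cdot)$ on $\mathbb{R}^n$ and satisfies $\psi(0, x_0) = g(x_0) + \eta$. Once such a $\psi$ is in hand, comparison for $\partial_t - F^\varepsilon$ (which in the setting of theorem~\ref{main} is exactly the $\Phi^{(3)}$-invariant comparison hypothesis applied with $\phi^\varepsilon \in \Phi^{(3)}$) yields $v^\varepsilon \leq \psi$ on $[0,T) \times \mathbb{R}^n$. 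Taking the upper semi-relaxed limit at $(0, x_0)$ gives $\bar v(0, x_0) \leq \psi(0, x_0) = g(x_0) + \eta$, and letting $\eta \downarrow 0$ concludes.

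Concretely, pick $\chi \in C^2(\mathbb{R}^n; [0, \infty))$ which is bounded with bounded first and second derivatives, vanishes to second order at the origin, and is bounded below by some $\chi_\infty > 0$ outside a ball $\{|x| \leq r_0\}$; then set
\begin{equation*}
\psi(t,x) = g(x_0) + \eta + K \chi(x - x_0) + L t.
\end{equation*}
Local uniform convergence $g^\varepsilon \to g$ together with continuity of $g$ at $x_0$ supplies $r \in (0, r_0)$ and $\varepsilon_0 > 0$ with $g^\varepsilon(x) \leq g(x_0) + \eta$ on $\{|x - x_0| \leq r\}$ whenever $\varepsilon < \varepsilon_0$. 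Setting $M := \sup_\varepsilon \|v^\varepsilon\|_\infty < \infty$ and choosing $K$ with $K\chi_\infty \geq 2M$ forces $\psi(0, \cdot) \geq v^\varepsilon(0, \cdot)$ on all of $\mathbb{R}^n$. Since $KD\chi$ and $KD^2\chi$ are bounded functions of $x$, locally uniform convergence $F^\varepsilon \to F$ combined with the uniform continuity clause of condition~\ref{UG314} produces a finite $L_0$ bounding $F^\varepsilon(t, x, KD\chi(x-x_0), KD^2\chi(x-x_0))$ uniformly in $(t,x)$ and $\varepsilon < \varepsilon_0$; taking $L > L_0$ makes $\psi$ a strict classical supersolution, and the comparison argument above closes.

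The main technical obstacle is precisely this last step: one must secure a uniform-in-$(t,x,\varepsilon)$ bound on the nonlinearity at the bounded arguments $KD\chi, KD^2\chi$. In the examples covered by the paper, $F$ is bounded whenever its gradient and Hessian arguments stay bounded, and this uniform bound propagates to the family $F^{\phi^\varepsilon}$ through the $\Phi^{(3)}$-convergence $\phi^\varepsilon \to \phi^{\mathbf z}$, i.e.\ through boundedness of the derivatives of the (inverse) flows uniformly in $\varepsilon$. The lower bound $\underline v(0, x_0) \geq g(x_0)$ is obtained by the symmetric argument with an infra-barrier of the same shape but opposite sign.
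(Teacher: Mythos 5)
Your strategy (build a global bounded supersolution $\psi$ and invoke comparison) is genuinely different from the paper's, which instead penalizes with the \emph{coercive} auxiliary function $w(t,x)=\gamma t+K|x-x_0|^2$, takes $(t_\varepsilon,x_\varepsilon)\in\mathrm{argmax}\,(v^\varepsilon-w)$, shows the time-argument cannot be $0$ along the sequence realizing the limsup, and then applies the viscosity subsolution definition \emph{at that single interior maximum} to get $0\ge \gamma - F^{\varepsilon_n}(t_n,x_n,2K(x_n-x_0),2KI)$ and a contradiction for $\gamma$ large. No comparison theorem for $\partial_t-F^\varepsilon$ is invoked anywhere.

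The gap you yourself flag is, I think, fatal under the paper's stated hypotheses, and it is worth being precise about why the paper's route avoids it. You need $L\ge \sup_{t\in[0,T],\,x\in\mathbb{R}^n,\,\varepsilon<\varepsilon_0} F^\varepsilon\bigl(t,x,KD\chi(x-x_0),KD^2\chi(x-x_0)\bigr)$, i.e.\ \emph{global-in-$x$} boundedness of $F^\varepsilon$ at bounded gradient/Hessian arguments. Neither the proposition's hypotheses ($F^\varepsilon$ continuous, $F^\varepsilon\to F$ locally uniformly) nor condition~\ref{UG314} (uniform continuity at bounded $p,X$, on the noncompact domain $[0,T]\times\mathbb{R}^n$) delivers this: uniform continuity in $x$ on $\mathbb{R}^n$ does not imply boundedness. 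Moreover, in the actual application $F^\varepsilon=F^{\phi^\varepsilon}$ contains the factor $F(t,\phi^\varepsilon_t(x),\dots)$ and $\phi^\varepsilon_t(x)$ sweeps out all of $\mathbb{R}^n$, so you would need $F$ itself to be globally bounded at bounded $p,X$ --- an extra standing assumption. The paper dodges this entirely because the coercive $w$ forces the maximum points $x_n$ into a fixed compact set (uniformly in $\gamma$), so only \emph{local} boundedness of $F^\varepsilon$ is used, which does follow from locally uniform convergence to the continuous $F$. Two secondary points: (i) the paper's proposition makes no comparison assumption on $\partial_t-F^\varepsilon$, so your reliance on comparison already makes your argument strictly narrower than the statement being proved; (ii) you cannot simply swap your bounded $\chi$ for the quadratic $|x-x_0|^2$ to fix the global bound, because then $\psi$ is unbounded and falls outside the scope of the paper's comparison theorem --- and conversely, with bounded $\chi$, $\max(v^\varepsilon-\psi)$ over the noncompact domain need not be attained, which is exactly why you are pushed toward comparison rather than the paper's argmax argument. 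There is also a minor quantifier slip: your $\chi$ and its radius $r_0$ are chosen before $r$, so nothing forces $\psi(0,\cdot)\ge g^\varepsilon$ on the annulus $r<|x-x_0|\le r_0$; one should choose $\chi$ (or rescale it) \emph{after} $r$ is fixed.
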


\begin{proof}
We adapt the argument of Fleming--Soner \cite[Section VII.5]{MR2179357FS} to
our setting and focus on showing $\bar{v}\left( 0,x\right) =g\left( x\right) 
$, the other equality being similar. Trivially $\bar{v}\left( 0,x\right)
\geq g\left( x\right) $. Suppose equality does not hold. Then there exists $%
x_{0}\in \mathbb{R}^{n}$ and $\delta >0$ so that%
\begin{equation*}
\bar{v}\left( 0,x_{0}\right) =g\left( x_{0}\right) +\delta .
\end{equation*}%
We can assume without loss of generality $g\left( x_{0}\right) =0$; for
otherwise consider $\tilde{v}^{\varepsilon }\left( t,x\right)
:=v^{\varepsilon }\left( t,x\right) -g\left( x_{0}\right) $. Since $%
g^{\varepsilon }\rightarrow g$ uniformly near $x_{0}$ there are $\rho >0$
and $\varepsilon _{0}>0$ such that 
\begin{equation}
g^{\varepsilon }\left( x\right) <\delta /2\text{ \ \ \ whenever }\left\vert
x-x_{0}\right\vert ^{2}<\rho ,\,\,\,\varepsilon <\varepsilon _{0}\text{.}
\label{gepsLTdelta}
\end{equation}%
(We could take\ $\rho =1$ in fact.) Define the (smooth) test-function%
\begin{equation*}
w\left( t,x\right) =\gamma t+K\left\vert x-x_{0}\right\vert ^{2}
\end{equation*}%
where $K=(\sup_{\varepsilon >0}\left\vert v^{\varepsilon }\right\vert
_{\infty ;\left[ 0,T\right] \times \mathbb{R}^{n}}+1)/\rho $ and $\gamma
\geq K$ will be chosen later. Now, if $x$ is such that $g^{\varepsilon
}\left( x\right) \geq \delta /2$, and if $\varepsilon <\varepsilon _{0}$,
then (\ref{gepsLTdelta}) implies that we must have $\left\vert
x-x_{0}\right\vert ^{2}\geq \rho $; it then follows that%
\begin{equation*}
w\left( t,x\right) \geq K\left\vert x-x_{0}\right\vert ^{2}\geq K\rho \geq
v^{\varepsilon }\left( t,x\right) -g^{\varepsilon }\left( x_{0}\right) + 
\left[ 1+g^{\varepsilon }\left( x_{0}\right) \right]
\end{equation*}%
By making $\varepsilon _{0}$ smaller if necessary we can assume that $%
\left\vert g^{\varepsilon }\left( x_{0}\right) \right\vert <1/2,$ say, for
all $\varepsilon <\varepsilon _{0}$ which shows that%
\begin{equation}
w\left( t,x\right) >v^{\varepsilon }\left( t,x\right) -g^{\varepsilon
}\left( x_{0}\right) \text{ \ \ \ \ \ \ \ \ \ \ \ whenever \ }g^{\varepsilon
}\left( x\right) \geq \delta /2,\,\,\,\varepsilon <\varepsilon _{0}.
\label{WtxEstimate}
\end{equation}%
For $\varepsilon >0$, choose%
\begin{equation*}
\left( t_{\varepsilon },x_{\varepsilon }\right) \in \mathrm{argmax}\left\{
v^{\varepsilon }\left( t,x\right) -w\left( t,x\right) :\left( t,x\right) \in %
\left[ 0,T\right] \times \mathbb{R}^{n}\right\} .
\end{equation*}

Also the definition of $\bar{v}$ implies that there exists $\varepsilon
_{n}\rightarrow 0$ and $\left( s_{n},y_{n}\right) \rightarrow \left(
0,x_{0}\right) $ such that%
\begin{equation*}
\delta =\bar{v}\left( 0,x_{0}\right) =\lim_{n}v^{\varepsilon _{n}}\left(
s_{n},y_{n}\right) .
\end{equation*}%
Set $\left( t_{n},x_{n}\right) =\left( t_{\varepsilon _{n}},x_{\varepsilon
_{n}}\right) $. Then%
\begin{equation}
\liminf_{n\rightarrow \infty }v^{\varepsilon _{n}}\left( t_{n},x_{n}\right)
-w\left( t_{n},x_{n}\right) \geq v^{\varepsilon _{n}}\left(
s_{n},y_{n}\right) -w\left( s_{n},y_{n}\right) =\delta .
\label{liminfContra}
\end{equation}%
We claim that $t_{n}\neq 0$ for sufficiently large $n$. Indeed, if $t_{n}=0$
then $v^{\varepsilon _{n}}\left( t_{n},x_{n}\right) =g^{\varepsilon
_{n}}\left( x_{n}\right) $. The above inequality then yields that there is $%
n_{0}$ such that, for all $n\geq n_{0}$%
\begin{equation*}
g^{\varepsilon _{n}}\left( x_{n}\right) =v^{\varepsilon _{n}}\left(
t_{n},x_{n}\right) \geq \frac{\delta }{2}+w\left( t_{n},x_{n}\right) \geq 
\frac{\delta }{2}
\end{equation*}%
and, from (\ref{WtxEstimate}), $w\left( t_{n},x_{n}\right) >v^{\varepsilon
_{n}}\left( t_{n},x_{n}\right) -g^{\varepsilon _{n}}\left( x_{0}\right) $.
It follows that%
\begin{equation*}
\liminf_{n\rightarrow \infty }v^{\varepsilon _{n}}\left( t_{n},x_{n}\right)
-w\left( t_{n},x_{n}\right) \leq \liminf_{n\rightarrow \infty
}g^{\varepsilon _{n}}\left( x_{0}\right) =0
\end{equation*}%
which contradicts (\ref{liminfContra}). Hence, for all $n\geq n_{0}$ we have 
$t_{n}\neq 0$ and the viscosity property of $v^{\varepsilon }$ gives (with
all derivatives evaluated at $t_{n},x\,_{n}$)%
\begin{eqnarray*}
0 &\geq &\partial _{t}w\left( t_{n},x_{n}\right) -F^{\varepsilon _{n}}\left(
t_{n},x_{n},Dw,D^{2}w\right) \\
&=&\gamma -F^{\varepsilon _{n}}\left( t_{n},x_{n},2K\left(
x_{n}-x_{0}\right) ,2KI\right) .
\end{eqnarray*}

Note that $x_{n}=x_{n}^{\gamma }$ (to indicate the dependence on $\gamma $)
remains bounded, uniformly in $n\in \left\{ n_{0},\dots \right\} $ and $%
\gamma \in \lbrack K,\infty )$. Since $F^{\varepsilon _{n}}$ is locally
uniformly continuous it is also locally uniformly bounded. A contradiction
is now obtained by taking $\gamma $ large enough.
\end{proof}

\end{document}